\newcommand{\Mod}[1]{\ (\mathrm{mod}\ #1)}
\newcommand{\Q}{\mathbb{Q}}
\newcommand{\Z}{\mathbb{Z}}
\newcommand{\PP}{\mathbb{P}}
\newcommand{\BZ}{\mathbb{Z}}
\DeclareMathOperator{\Gal}{Gal}
\newtheorem{lem}{Lemma}[section]
\newtheorem{prop}[lem]{Proposition}
\newtheorem{claim*}{Claim}
\newtheorem{thm}[lem]{Theorem}
\theoremstyle{rem}
\newtheorem{rem}[lem]{Remark}
\title{The Effect of Quadratic Base Change on Torsion of Elliptic Curves}
\author{{\i}rmak Bal\c{c}{\i}k}
\author{Burton Newman}
\address{Northwestern University, Department of Mathematics, 2033 Sheridan Road, Evanston, IL 60208, USA}
\email{irmak.balcik@northwestern.edu}
\email{bnewman235@gmail.com}
\address{}
\email{}
\begin{document}

\begin{abstract}
    Let $K$ be a quadratic number field and let $E$ be an elliptic curve defined over $K$ such that $E[2] \not\subseteq E(K).$ In this paper, we study the effect of quadratic base change on $E(K)_{\text{tor}}.$ Moreover, for a given elliptic curve $E/K$ with prescribed torsion group over $K,$ (no restriction on its $2$-torsion part) we describe a fast algorithm to find all quadratic extensions $L/K$ in which $E(K)_{\text{tor}} \subsetneq E(L)_{\text{tor}}$ and describe $E(L)_{\text{tor}}$ in each such case. In particular, we determine the growth of $E(K)_{\text{tor}}$ upon quadratic base change when $K$ is any quadratic cyclotomic field, which completes the earlier work of the second author \cite{New17}. 
\end{abstract}
%\addtocounter{section}{1}
\maketitle
\section{Introduction}
The possible torsion groups $E(\Q)_{\text{tor}}$ of an elliptic curve $E$ over $\Q$
are known by a celebrated theorem of Mazur \cite{Maz78}. These groups are the following:
\begin{equation} \label{eq:mazurgroups}
\begin{array}{ll}
C_n & 1\leq n \leq 12,\  n\neq 11 \\
C_2 \oplus C_{2n} & 1\leq n\leq 4.
\end{array}
\end{equation}
%\end{thm}

Subsequently, the work of Mazur was generalized to quadratic number fields by Kamienny \cite{Kam92} and Kenku-Momose \cite{KM88}. The possible torsion groups that can be realized as $E(K)_{\text{tor}}$ as one varies $E$ over all elliptic curves defined over any quadratic number field $K$ are the following:
\begin{equation} \label{eq:kamiennygroups}
\begin{array}{ll}
 C_n &  1\leq n\leq 18,\ n\neq 17  \\  
 C_2 \oplus C_{2n} & 1\leq n\leq 6  \\ 
C_3 \oplus C_{3n} & 1\leq n\leq 2 \\ 
C_4 \oplus C_4. &  
\end{array}
\end{equation}

The full description of torsion groups appearing over cubic number fields has been settled in \cite{DEVMB21}. Over quartic number fields, we still do not have a complete classification, analogous to $\eqref{eq:mazurgroups}.$ We know however \cite{JKP06} which torsion groups occur infinitely often up to isomorphism and \cite{DKSS17} that $17$ is the largest prime dividing the order of a torsion group over a quartic number field. As a first step toward this direction, we ask ``given an elliptic curve $E$ over any quadratic number field $K$, if $E[2] \not\subseteq E(K)_{\text{tor}}$, then how does $E(L)_{\text{tor}}$ relate with $E(K)_{\text{tor}}$ where $[L:K]=2$?" If $E(K)_{\text{tor}} \subsetneq E(L)_{\text{tor}},$ we say that torsion grows. In particular, we know $E(L)_{\text{tor}} = E(K)_{\text{tor}}$ for all but finitely many quadratic extensions $L/K$ since there are only finitely many torsion groups that can arise over a quartic number field. The case $K=\Q$ has already been studied extensively in \cite{GJT14}, \cite{GJT15}, \cite{JNT},  \cite{Gonz_lez_Jim_nez_2017}, \cite{Gonz_lez_Jim_nez_2019}, \cite{Gonz_lez_Jim_nez_2020}. We studied the main objective, which establish the first main result of this paper below. In addition, given any elliptic curve $E/K$ and a complete list of all possible torsion structures over $K,$ we describe an algorithm to find all quadratic extensions $L/K$ in which $E(K)_{\text{tor}}$ grows and describe $E(L)_{\text{tor}}$ in each such case. One can find the informal description of the algorithm in section 6 and the Magma \href{https://github.com/BaseChangeTorsion/The-Effect-of-Quadratic-Base-Change-on-Torsion}{{\bf{code}}} for its implementation.

\begin{thm}\label{mainthm1}
Let $K$ be a quadratic number field, $E/K$ an elliptic curve, and let $L/K$ be any quadratic extension with $L=K(\sqrt{d})$ for $d \in K$ a non-square. 
\begin{enumerate}[(i)]
\item \label{2p-2x4} If $E(K)[2^{\infty}]\simeq C_2$, then $C_2 \oplus C_4 \not\subseteq E(L).$ 

\item \label{2p^k} If $C_{2p^k} \subseteq E(L)$ for an odd prime $p$ and $k>0,$ then either $E(K)$ or $E^d(K)$ has a subgroup of the form $C_{2p^k}.$

\item \label{4-4x8} If $E(K)_{\text{tor}}\simeq C_4$ then $ E(L)_{\text{tor}} \not\simeq C_4 \oplus C_8.$

\item \label{4-16} If $E(K)_{\text{tor}}\simeq C_4$ then $E(L)_{\text{tor}} \not\simeq C_{16}.$

\item \label{8-2x16} If $E(K)_{\text{tor}}\simeq C_8$ and $E(L)_{\text{tor}}\simeq C_2 \oplus C_{16}$ then $E^d(K)_{\text{tor}} \simeq C_4.$

\item \label{2-2x20} If $E(K)[2]\simeq C_2,$ then $C_2 \oplus C_{20} \not\subseteq E(L).$ 

\item \label{4-2x24} If $E(K)_\text{tor}\simeq C_4$, then $E(L)_\text{tor} \not\simeq C_2 \oplus C_{24}$.

\item \label{16point} If $C_{32} \subseteq E(L)$ then either $E(K)$ or $E^d(K)$ has a point of order $16.$
\end{enumerate}
\end{thm}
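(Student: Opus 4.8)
The plan is to study the generator $\sigma$ of $\Gal(L/K)$ directly as an automorphism of the finite abelian group $E(L)_{\tors}$. Since $E(K)=E(L)^{\sigma}$, restricting to torsion gives $E(K)_{\tors}=\Ker(\sigma-1\mid E(L)_{\tors})$. The twisting isomorphism $\psi\colon E\to E^d$ defined over $L$ satisfies ${}^{\sigma}\psi=-\psi$, so $S=\psi(P)$ lies in $E^d(K)$ iff $\sigma P=-P$; hence $E^d(K)_{\tors}\cong\Ker(\sigma+1\mid E(L)_{\tors})$. Away from $2$ the operator $\sigma$ is diagonalizable, which recovers $E(L)[m]\cong E(K)[m]\oplus E^d(K)[m]$ for odd $m$. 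Two further inputs are used throughout: (a) $E$ and $E^d$ have the same field of definition of their $2$-torsion, so $E(K)[2]=E^d(K)[2]=E(L)[2]^{\sigma}$, in particular these have equal $\F_2$-rank; and (b) the identity $2P=(P+\sigma P)-(\sigma P-P)$, in which $P+\sigma P\in E(K)$ and $\psi(P-\sigma P)\in E^d(K)$.

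The two directional statements (ii) and (viii) follow quickly from (a)--(b). For (ii), the odd splitting puts $C_{p^k}$ inside $E(K)$ or $E^d(K)$; to upgrade to $C_{2p^k}$ one needs an order-$2$ point in the same curve, supplied by a rigidity lemma: $E(K)[2]=0$ forces the mod-$2$ representation $\bar\rho\colon G_K\to\GL_2(\F_2)\cong S_3$ to have image one of the two fixed-point-free subgroups $C_3,S_3$, and since $C_3$ has no index-$2$ subgroup the image $\bar\rho(G_L)$ still contains $C_3$, so $E(L)[2]=0$; contrapositively $E(L)[2]\neq0\Rightarrow E(K)[2]\neq0\Rightarrow E^d(K)[2]\neq0$ by (a). For (viii), apply (b) to a point $Q$ of order $32$: then $2Q$ has order $16$ and equals $(Q+\sigma Q)+(Q-\sigma Q)$, so one of the two $2$-power-order summands has order at least $16$, placing a point of order $16$ in $E(K)$ or in $E^d(K)$.

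The remaining parts are $2$-primary and are handled by enumerating the involutions $\sigma$ on the hypothesized group $G=E(L)_{\tors}$ and computing $\Ker(\sigma-1)$ and $\Ker(\sigma+1)$. The hypothesis on $E(K)_{\tors}$ fixes the $\F_2$-rank of $E(K)[2]=G[2]^{\sigma}$, forcing $\sigma|_{G[2]}$ to be a transposition in $S_3$, which severely limits $\sigma$. For (iv), $G=C_{16}$ is cyclic, so $\sigma$ is multiplication by an odd $m$ with $m^2\equiv1\pmod{16}$, whence $|\Ker(\sigma-1)|=\gcd(m-1,16)\in\{16,8,2\}$, never $4$; thus $E(K)_{\tors}\neq C_4$. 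For (iii), a direct check shows every transposition-type involution on $C_4\oplus C_8$ has $|\Ker(\sigma-1)|=8$, again excluding $E(K)_{\tors}=C_4$. For (i), assuming $\Ker(\sigma-1)=C_2$ on $E(L)[2^{\infty}]\supseteq C_2\oplus C_4$, a short analysis of which $2$-torsion point an order-$4$ element doubles onto always produces a $\sigma$-fixed point of order $4$, contradicting $\Ker(\sigma-1)=C_2$. For (v), on $G=C_2\oplus C_{16}$ the condition $\Ker(\sigma-1)=C_8$ singles out exactly two involutions, and for both one computes $\Ker(\sigma+1)\cong C_4$, giving $E^d(K)_{\tors}\cong C_4$.

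The last two parts combine the $2$-primary enumeration with the odd splitting and the list \eqref{eq:kamiennygroups}. For (vii), writing $E(L)_{\tors}=C_2\oplus C_{24}$, the $2$-primary calculation forces $\Ker(\sigma+1)[2^{\infty}]=C_8$, while the odd splitting places $C_3$ in $E^d(K)$ (as $E(K)[3]=0$); then $C_{24}\subseteq E^d(K)$, impossible over a quadratic field by \eqref{eq:kamiennygroups}. For (vi), the odd splitting places $C_5$ in $E(K)$ or $E^d(K)$; since $C_{20}$ is absent from \eqref{eq:kamiennygroups}, the corresponding curve has no point of order $4$, so by (a) its $2$-primary torsion is exactly $C_2$, and part (i) applied to that curve (its twist by $d$ being $E$ over $L$) yields $C_2\oplus C_4\not\subseteq E(L)$, contradicting $C_2\oplus C_{20}\subseteq E(L)$. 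The main obstacle is the prime $2$: because $\Ker(\sigma\mp1)$ need not split $E(L)[2^{\infty}]$ (the Herbrand quotient of $\sigma$ on a finite $2$-group is generally nontrivial), one cannot simply read off $E(K)$ and $E^d(K)$ as in the odd case, and the real work is the exhaustive enumeration of involutions on each $2$-group together with the bookkeeping modulo $2^n$; the most delicate point is excluding the surviving branch in (i), where the contradiction comes only from an a priori unexpected fixed point of order $4$.
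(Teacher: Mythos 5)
Your proof is correct, and while it follows the same skeleton as the paper's --- analyzing the involution $\sigma\in\Gal(L/K)$ on $E(L)_{\tors}$, the odd-order splitting of Lemma \ref{lem1}, and the Kamienny--Kenku--Momose list \eqref{eq:kamiennygroups} --- it is a genuinely different route at the $2$-primary level. The paper works with Kwon's homomorphism $P\mapsto P-\sigma(P)$ and the induced injection $E(L)_{\tors}/E(K)_{\tors}\hookrightarrow E^d(K)_{\tors}$ (Proposition \ref{injection}), which only constrains $E^d(K)_{\tors}$; your exact identification $E^d(K)_{\tors}\cong\Ker(\sigma+1\mid E(L)_{\tors})$ pins it down completely, and this stronger tool makes parts (iv) and (v) self-contained where the paper outsources. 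For (iv) the paper simply cites \cite{GJT14}, whereas your observation that $\gcd(m-1,16)\in\{16,8,2\}$ for $m^2\equiv 1\pmod{16}$ is a complete elementary proof. For (v) the paper, after the same case analysis isolating $\sigma(P)=5P+Q$ and $13P+Q$, still needs the four-term exact sequence \eqref{exact1} and a cokernel-exponent bound from \cite{GJT14} to exclude $E^d(K)_{\tors}\cong C_8$ or $C_{16}$; your direct computation of $\Ker(\sigma+1)\cong C_4$ for those two involutions finishes immediately. I checked your enumeration claims and they hold: on $C_2\oplus C_{16}$ exactly the two involutions $P\mapsto 5P+Q$, $13P+Q$ (with $Q\mapsto Q+8P$) have fixed subgroup $C_8$, and both have $\Ker(\sigma+1)\cong C_4$; every involution of $C_4\oplus C_8$ restricting to a transposition on the $2$-torsion has fixed subgroup of order $8$, never $4$ (part (iii)); and the involutions of $C_2\oplus C_8$ with fixed subgroup $C_4$ all have $\Ker(\sigma+1)\cong C_8$ (part (vii)), after which your $C_3\subseteq E^d(K)$ step matches the paper's $a=7$ case. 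Parts (ii), (vi), (viii) agree with the paper's arguments in all but packaging ((viii) via $2Q=(Q+\sigma Q)+(Q-\sigma Q)$ is in fact slightly slicker than the paper's split into $a\equiv 1,3\pmod 4$), and your rigidity lemma for the mod-$2$ image is exactly Theorem \ref{burton}(i), which the paper leaves as ``easily verified.'' One caution: in (i) your hypothesis concerns $E(L)[2^\infty]$, which may be strictly larger than $C_2\oplus C_4$, so your ``short analysis'' must cover every $2$-group $G\supseteq C_2\oplus C_4$ with $|G^{\sigma}|=2$; this does go through (since $(\sigma-1)^2=0$ on $G[2]$ forces $G$ to have rank $2$, $\Ker(\sigma+1)$ is then cyclic of index at most $2$, and a $\sigma$-fixed point of order $4$ falls out, in parallel with Lemma \ref{generallemma4}), but as written this is the one place where your sketch leaves real work to the reader.
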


Theorem \ref{mainthm1} allows to establish the second main theorem of this paper for which one can find Table \ref{table:3.2} for every possible situation.

\begin{thm}\label{mainthm2}
Let $K$ be a quadratic cyclotomic field, $E$ an elliptic curve defined over $K$ with $E(K)[2]\simeq C_2,$ and let $L/K$ be any quadratic extension.  
\begin{enumerate}
\item If $K=\Q(i),$ then $E(L)_{\text{tor}}$ is isomorphic to one of the following groups:
\begin{equation*} 
\begin{array}{ll}
    C_{2n} & 1\leq n \leq 8,\ n \neq 7 \\
    C_2 \oplus C_{2n} & 1 \leq n \leq 8,\ n\neq 7\\
    C_3 \oplus C_{6}. &   
\end{array}
\end{equation*}
\item If $K=\Q(\sqrt{-3}),$ then $E(L)_{\text{tor}}$ is isomorphic to one of the groups listed above or
$ C_4 \oplus C_4. $
\end{enumerate}
\end{thm}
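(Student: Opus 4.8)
The plan is to compute $E(L)_{\mathrm{tor}}$ by splitting it into its odd and $2$-primary parts, analyzing both $E$ and its quadratic twist $E^d$ over the base $K$, and then feeding the result into Theorem~\ref{mainthm1}. Two structural inputs drive the argument. First, twisting does not move the field of definition of the $2$-torsion, so $E^d(K)[2]\simeq E(K)[2]\simeq C_2$; hence $E$ and $E^d$ satisfy the same hypothesis over $K$. Second, for $L=K(\sqrt d)$ with $\Gal(L/K)=\langle\sigma\rangle$, the averaging maps $P\mapsto\tfrac12(P\pm\sigma P)$ yield, for every odd $m$, a canonical isomorphism
\begin{equation*}
E(L)[m]\;\simeq\;E(K)[m]\oplus E^d(K)[m],
\end{equation*}
while, since $E(K)[2]=C_2$ is contained in $E(L)[2]\subseteq E[2]$, the group $E(L)[2]$ is either $C_2$ or $C_2\oplus C_2$. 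I would begin by recording, from the known classification of torsion over $\Q(i)$ and $\Q(\sqrt{-3})$, the admissible odd parts of $E(K)_{\mathrm{tor}}$ for a curve with a rational $2$-torsion point: the key facts are that $7$-, $9$-, $11$-, $13$-torsion and $C_5\oplus C_5$ do not occur over these fields.

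\emph{Odd part.} By the displayed isomorphism the odd part of $E(L)_{\mathrm{tor}}$ is the direct sum of the odd parts of $E(K)$ and $E^d(K)$. A point of order $7$ in $E(L)$, together with the rational $2$-torsion, would give $C_{14}=C_{2\cdot 7}\subseteq E(L)$; by part~(\ref{2p^k}) of Theorem~\ref{mainthm1} this forces $C_{14}\subseteq E(K)$ or $C_{14}\subseteq E^d(K)$, which is impossible over $\Q(i)$ and $\Q(\sqrt{-3})$. This is precisely the exclusion $n\neq 7$. The same descent shows the surviving odd parts are $1,\,C_3,\,C_5$ and $C_3\oplus C_3$, the last requiring $\zeta_3\in L$ and producing the entry $C_3\oplus C_6$ (which over $\Q(i)$ occurs only for $L=\Q(\zeta_{12})$).

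\emph{The $2$-part and the $C_4\oplus C_4$ dichotomy.} Starting from $E(K)[2]=C_2$, I would read off the admissible $2$-groups using Theorem~\ref{mainthm1}: part~(\ref{2p-2x4}) removes $C_2\oplus C_4$ when the base $2$-part is minimal, parts~(\ref{4-4x8}),~(\ref{4-16}),~(\ref{8-2x16}) control growth out of $C_4$ and $C_8$, part~(\ref{2-2x20}) excludes $C_2\oplus C_{20}$, part~(\ref{4-2x24}) excludes $C_2\oplus C_{24}$, and part~(\ref{16point}) excludes $C_{32}$, capping the $2$-power at $16$. The delicate case is $C_4\oplus C_4$. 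Here $E[4]\subseteq E(L)$ forces $L=K(E[2])$, so $\sigma$ acts on $E[4]\simeq(\Z/4)^2$ as an involution reducing mod $2$ to the transvection $\begin{pmatrix}1&1\\0&1\end{pmatrix}$, and the determinant of this action equals the value of the mod-$4$ cyclotomic character on $\sigma$. Over $\Q(i)$ one has $\mu_4\subseteq K\subseteq L$, so $\det\sigma=1$; a direct matrix computation shows no involution in $\SL_2(\Z/4)$ reduces to that transvection, whence $C_4\oplus C_4\not\subseteq E(L)$. Over $\Q(\sqrt{-3})$ one may instead take $L=\Q(\zeta_{12})$, where $\sigma$ acts nontrivially on $\mu_4$ and $\det\sigma=-1$ is permitted; then the required involution exists in $\GL_2(\Z/4)$ and $C_4\oplus C_4$ is attained.

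\emph{Assembly and realizability.} Combining the admissible odd parts with the admissible $2$-parts, subject to $E(L)[2]\in\{C_2,\,C_2\oplus C_2\}$, produces exactly the two lists in the statement. To finish I would exhibit, for each group on the list, an explicit curve $E/K$ and extension $L$ realizing it, as recorded in Table~\ref{table:3.2}, thereby showing the lists are sharp. The main obstacle is the $2$-primary analysis: enumerating precisely which cyclic and rank-$2$ groups arise as $E(L)[2^\infty]$ out of $E(K)[2]=C_2$, and above all the $C_4\oplus C_4$ dichotomy, whose resolution rests on the mod-$4$ cyclotomic character and is the most subtle point; supplying explicit witnesses for the largest cases ($C_{16}$, $C_2\oplus C_{16}$, and $C_4\oplus C_4$ over $\Q(\sqrt{-3})$) is the remaining computational task.
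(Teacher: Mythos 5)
Your skeleton matches the paper's proof — odd/$2$-primary splitting via Lemma \ref{lem1}, twist-invariance of the $2$-torsion (Theorem \ref{burton}(ii)), the descent constraints of Theorem \ref{mainthm1}, assembly against Theorem \ref{mainlist}, and explicit witnesses — and your $C_4\oplus C_4$ dichotomy is correct and is in fact more self-contained than the paper in one spot: for $K=\Q(i)$ the paper cites \cite{Ej18}, whereas your argument works directly (the mod-$4$ cyclotomic character forces $\det\rho(\sigma)\equiv 1$, and then $\rho(\sigma)^2=I$ with $\det\rho(\sigma)=1$ gives $(\operatorname{tr}\rho(\sigma))\,\rho(\sigma)=2I$ in $\GL_2(\Z/4\Z)$, forcing $\rho(\sigma)\equiv I \bmod 2$, which contradicts the transvection shape imposed by $E(K)[2]\simeq C_2$).

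The genuine gap is that pure Galois descent cannot deliver the stated list: the paper's proof runs through Table \ref{table:3.2}, whose hardest exclusions rest on the modular-curve computations of Section 4 (Theorem \ref{isogeny} with Lemma \ref{n-cycle}, the $X_1(4,8)$ and $X_1(6,6)$ computations inside Proposition \ref{mainprop}, and the quartic-field exclusions of \cite{BN16}), and your proposal neither invokes nor replaces any of them. Concretely, your claim that ``the same descent shows the surviving odd parts are $1$, $C_3$, $C_5$, $C_3\oplus C_3$'' fails: Lemma \ref{lem1} permits the $3$-part and $5$-part to sit on opposite sides of the twist, e.g.\ $E(K)_{\text{tor}}\simeq C_6$ and $E^d(K)_{\text{tor}}\simeq C_{10}$, both allowed by Theorem \ref{mainlist}, which yields odd part $C_{15}$, hence $C_{30}\subseteq E(L)$; this configuration is consistent with every part of Theorem \ref{mainthm1} (part (ii) only returns $C_6$ and $C_{10}$ over $K$, prime by prime), and the paper eliminates it only via the $X_0(15)$/$N=30$ analysis, where the four curves with a $15$-cycle pointwise rational over a quadratic extension have torsion exactly $C_{15}$ there, leaving no room for the $2$-torsion point. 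The same omission leaves cyclic $C_{20}$, $C_{24}$ and $C_{48}$ alive — their $2$-parts and odd parts separately pass all your filters, and the paper needs $X_0(20)$ and $X_0(24)$ together with Lemma \ref{n-cycle} — leaves $C_2\oplus C_{24}$ alive from bases $C_8$ and $C_{12}$ (Theorem \ref{mainthm1}(vii) covers only base $C_4$; Proposition \ref{mainprop}(3) again uses Theorem \ref{isogeny}), leaves $C_4\oplus C_8$ alive over $\Q(\sqrt{-3})$, where your $\SL_2(\Z/4\Z)$ obstruction vanishes since $\det\rho(\sigma)=-1$ is now possible (the paper uses $X_1(4,8)$ over $\Q(\zeta_{12})$, via \cite{Naj}), and leaves $C_6\oplus C_6$ alive from base $C_6$ (the paper uses a rank computation on $X_1(6,6)$). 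Finally, note that your assembly, done honestly, outputs $C_6\oplus C_6$ over $\Q(\sqrt{-3})$ (odd part $C_3\oplus C_3$ from the twist with full $2$-torsion over $L$), and this group is actually realized in the paper's own Tables \ref{table:3.2} and \ref{table:last1} despite being absent from the statement's list; so the claim that the combination ``produces exactly the two lists'' is not something descent justifies, and the delicate cases you flagged as ``the main obstacle'' are precisely where the paper had to leave group theory and do arithmetic geometry.
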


The case with trivial 2-torsion is treated differently mainly due to Lemma \ref{lem1}, which establishes the third main result of this paper. 

\begin{thm}\label{burtonmainthm}
    Let $K$ be a quadratic cyclotomic field, $E$ an elliptic curve defined over $K$ with $E(K)[2]\simeq C_1,$ and let $L/K$ be any quadratic extension. 
    \begin{enumerate}
    \item If $K=\Q(i),$ then $E(L)_{\text{tor}}$ is isomorphic to one of the following groups:
    \begin{equation*} 
\begin{array}{ll}
    C_{n}, & n \in \{1,3,5,9,15\}.
    %C_3 \oplus C_{3}. 
\end{array}
\end{equation*}
\item If $K=\Q(\sqrt{-3}),$ then $E(L)_{\text{tor}}$ is isomorphic to one of the groups listed above or
$ C_3 \oplus C_3.$ 
\end{enumerate}
\end{thm}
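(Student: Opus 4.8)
The plan is to push the entire problem onto the odd part of the torsion, where quadratic base change is governed by the two twists of $E$ over $K$. First I would apply Lemma \ref{lem1}: since $E(K)[2]\simeq C_1$, the image of the mod-$2$ representation $\bar\rho_{E,2}$ in $\GL_2(\F_2)\cong S_3$ contains an element of order $3$, so the quadratic extension $L/K$ either is linearly disjoint from $K(E[2])$ or meets it in the index-$3$ subgroup fixing no nonzero point; in both cases no new $2$-torsion point becomes rational, giving $E(L)[2]\simeq C_1$ and hence $E(L)[2^{\infty}]\simeq C_1$. Thus $E(L)_{\text{tor}}$ has odd order, and the task reduces to controlling its $p$-primary parts for odd primes $p$.

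For each odd prime $p$ I would use the eigenspace decomposition under quadratic base change. Writing $L=K(\sqrt d)$ and letting $E^d$ denote the quadratic twist, inverting $2$ splits $E(L)\otimes\Z[1/2]$ into the $(\pm1)$-eigenspaces of $\Gal(L/K)$, yielding $E(L)[p^{\infty}]\simeq E(K)[p^{\infty}]\oplus E^d(K)[p^{\infty}]$. Hence $E(L)_{\text{tor}}$ is assembled, prime by prime, from the odd torsion of the two curves $E$ and $E^d$, both defined over the fixed quadratic cyclotomic field $K$. I would then bound each summand by the known classification of torsion subgroups over $\Q(i)$ and over $\Q(\sqrt{-3})$, obtaining a short finite list of candidate pairs, and form all direct sums; the root-of-unity restriction from the Weil pairing ($C_m\oplus C_m\subseteq E(L)$ forces $\zeta_m\in L$) together with the fact that no quartic $L\supseteq K$ contains $\zeta_5,\zeta_7$, or $\zeta_9$ immediately kills every rank-two odd group except those built from $3$-torsion.

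The decisive point, and what separates the two fields, is that $E$ and $E^d$ are not independent: $\bar\rho_{E^d,p}\simeq\bar\rho_{E,p}\otimes\chi_d$, where $\chi_d$ is the quadratic character cut out by $L/K$. Over $K=\Q(\sqrt{-3})=\Q(\zeta_3)$ one has $\zeta_3\in K$, so a curve may already satisfy $E(K)[3]\simeq C_3\oplus C_3$ outright, and this full $3$-torsion persists in $E(L)$; this is the structural reason $C_3\oplus C_3$ appears in the second list. Over $K=\Q(i)$, by contrast, $\zeta_3\notin K$, so rank-two $3$-torsion cannot be rational over $K$ and can only enter $E(L)$ for the unique quartic field $L=\Q(\zeta_{12})$ containing $\zeta_3$. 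Through the twist decomposition, $C_3\oplus C_3\subseteq E(L)$ then forces a point of order $3$ on both $E(K)$ and $E^{-3}(K)$, i.e. both the trivial character and $\chi_{-3}=\omega$ (the mod-$3$ cyclotomic character) as subrepresentations of $\bar\rho_{E,3}$; since these characters are distinct, $\bar\rho_{E,3}$ must be the split sum $\mathbf{1}\oplus\omega$. Excluding this split configuration over $\Q(i)$ is precisely the obstruction to $C_3\oplus C_3$ there.

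The main obstacle is the sufficiency direction: the necessary conditions above (odd order, the Weil pairing, and the twist correlation) still leave candidate groups that must each be realized by a genuine curve over $K$ together with its twist, or else excluded. I expect the real work to be the corresponding modular-curve analysis over $K$—deciding exactly when the split configuration $\bar\rho_{E,3}\simeq\mathbf{1}\oplus\omega$ is realizable over $\Q(\sqrt{-3})$ but obstructed over $\Q(i)$, and ruling out the cyclic and mixed candidates (such as $C_3\oplus C_9$ and the higher prime-power cases) that survive every representation-theoretic test—which is exactly the determination carried out by the algorithm of Section~6.
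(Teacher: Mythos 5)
Your skeleton matches the paper's reduction step for step: Theorem \ref{burton}(i) kills the $2$-part, Lemma \ref{lem1} gives $E(L)[p^{\infty}]\simeq E(K)[p^{\infty}]\oplus E^d(K)[p^{\infty}]$ for odd $p$, Theorem \ref{mainlist} bounds both summands, and the Weil-pairing argument you sketch is the paper's Lemma \ref{full}. But the decisive content --- deciding which pairs $\bigl(E(K)_{\text{tor}},E^d(K)_{\text{tor}}\bigr)$ can actually coexist --- is exactly what you leave open, and you misattribute it: the algorithm of Section 6 only computes growth for individual input curves and proves no non-existence statement. In the paper this step is Proposition \ref{oddtwist}, proved by converting a coexisting pair of odd-order points of coprime orders $m,n$ into an $mn$-cycle pointwise rational over $L$ (via \cite[Prop 3.1]{Bal(2)21}) and then invoking the modular-curve computations of Section 4 (Theorem \ref{isogeny}): no such cycles exist for $N=21,27,35,45$ --- including the genus-$3$ symmetric-square, rank-$0$ Jacobian argument of Proposition \ref{21-cycle}, needed because $X_0(21)$ has rank $1$ over $\Q(i)$ --- while for $N=15$ there are exactly four curves, whose quadratic twists are checked directly to realize only the $(C_3,C_5)$ pair, i.e. $C_{15}$. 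Your representation-theoretic and root-of-unity tests are silent on the cyclic and mixed candidates $C_{21}$, $C_{35}$, $C_{45}$, $C_3\oplus C_9$, so without this geometric input your plan cannot close.

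The one concrete exclusion you do propose would fail. You claim the split configuration $\bar\rho_{E,3}\simeq\mathbf{1}\oplus\omega$ is obstructed over $\Q(i)$ and that this rules out $C_3\oplus C_3$ there; but that configuration is realizable over $\Q(i)$ --- for instance the base change of $[0,1,1,-9,-15]$, which the paper's own Table \ref{table:last2} lists with growth $C_3\to C_3\oplus C_3$ in $L=\Q(i,\sqrt{-3})$ --- and consistently, Proposition \ref{oddtwist}(3) permits the pair $(C_3,C_3)$ over both fields. Theorem \ref{burton}(iii)/(v) only constrains where $E(K)[3]$ itself can be full or where it is forced cyclic; nothing prevents $E$ and $E^{-3}$ from each having a $K$-rational $3$-point over $\Q(i)$. (For the same reason your assembly of direct sums also produces $C_7$ from the pair $(C_1,C_7)$, realized in Table \ref{table:last2}; so the stated list for $\Q(i)$ cannot be recovered by any correct argument of this shape, and the tension here is with the theorem's statement, not with the reduction.) In short: your necessary-conditions framework coincides with the paper's, but the core of the proof --- Proposition \ref{oddtwist} resting on Theorem \ref{isogeny} --- is absent, and the step you single out as decisive is provably not available.
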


In addition, these groups are realized except possibly the group $C_2 \oplus C_{16}$ (see Tables \ref{table:last2}, \ref{table:last1}). Compared to the $K=\Q$ case, we used different techniques, ranging from the Galois action on torsion subgroup of an elliptic curve to symmetric square of an algebraic curve to study quadratic points on certain modular curves (see Theorem \ref{isogeny}).

Two reasons motivate our focus on these fields. First, the classification of possible torsion groups for each quadratic cyclotomic field is complete due Najman (see Theorem \ref{mainlist} below). Second, the study of growth of torsion for elliptic curves without full $2$-torsion is the remaining open case in the second author's earlier result \cite{New17}. 

\begin{thm}[\cite{Naj10}, \cite{naj111}]\label{mainlist} Let $K$ be a quadratic cyclotomic field and let $E$ be any
elliptic curve defined over $K.$
\begin{enumerate}
\item If $K=\Q(i)$, then $E(K)_{\text{tor}}$ is either one of the groups in \eqref{eq:mazurgroups} or $C_4 \oplus C_4.$

\item If $K=\Q(\sqrt{-3})$, then $E(K)_{\text{tor}}$ is either one of the groups in \eqref{eq:mazurgroups}, $C_3 \oplus C_3$ or $C_3 \oplus C_6.$
\end{enumerate}
\end{thm}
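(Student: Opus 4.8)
The plan is to start from the Kenku--Momose and Kamienny classification recorded in \eqref{eq:kamiennygroups}, which already lists every group occurring as $E(K)_{\text{tor}}$ for some elliptic curve over some quadratic field. Since $\Q(i)$ and $\Q(\sqrt{-3})$ are particular quadratic fields, it suffices to go through that finite list and decide, for each group $G$ appearing there but not in the target list for the field $K$ at hand, that $G$ cannot be realized over $K$; the groups common to both lists are then confirmed by exhibiting explicit curves. The groups one must eliminate split into two families calling for genuinely different tools: the groups containing full $\ell$-torsion for $\ell \in \{3,4\}$ (namely $C_3 \oplus C_3$, $C_3 \oplus C_6$ and $C_4 \oplus C_4$), and the cyclic groups $C_N$ with $N \in \{11,13,14,15,16,18\}$ together with $C_2 \oplus C_{10}$ and $C_2 \oplus C_{12}$.

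For the full-torsion groups I would use the Weil pairing. If $E[\ell] \subseteq E(K)$, then Galois-equivariance and nondegeneracy of $e_\ell \colon E[\ell] \times E[\ell] \to \mu_\ell$ force $\mu_\ell \subseteq K$, i.e. $\zeta_\ell \in K$. Since $\zeta_4 = i \in K$ only for $K = \Q(i)$ and $\zeta_3 \in K$ only for $K = \Q(\sqrt{-3})$, this immediately rules out $C_4 \oplus C_4$ over $\Q(\sqrt{-3})$ and rules out both $C_3 \oplus C_3$ and $C_3 \oplus C_6$ (each of which contains $E[3]$) over $\Q(i)$. The same device is useless for the $C_2 \oplus C_{2n}$ groups, since these only force $\mu_2 \subseteq K$, which is automatic.

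For the remaining cyclic and $C_2 \oplus C_{2n}$ groups I would pass to modular curves: a $K$-rational point of order $N$ yields a non-cuspidal $K$-rational point on $X_1(N)$, and a subgroup isomorphic to $C_2 \oplus C_{2n}$ yields one on $X_1(2,2n)$. Each relevant level has only finitely many rational points over $\Q$ (equivalently, genus at least one: the levels $N=11,14,15$ give genus one, the levels $N=13,16,18$ give genus two, and $X_1(2,10)$, $X_1(2,12)$ are likewise of positive genus). The task then becomes computing the $K$-rational points for $K \in \{\Q(i),\Q(\sqrt{-3})\}$ and checking that each is a cusp or a CM point not carrying a $K$-rational point of the required order. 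For the genus-one levels, $X_1(N)$ is an elliptic curve over $\Q$, and I would compute its Mordell--Weil group over $K$; once one verifies the rank over $K$ is zero, the $K$-points are the finitely many torsion points, which are then enumerated and inspected. For the genus-two levels and the two $C_2 \oplus C_{2n}$ curves I would instead determine the complete set of quadratic points — for instance by studying the image of the symmetric square $\Sym^2 X$ in the Jacobian together with a Mordell--Weil rank bound of Chabauty type — and verify that none is defined over $\Q(i)$ or $\Q(\sqrt{-3})$ apart from cusps and CM points.

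The main obstacle is precisely this higher-genus step: pinning down all quadratic points, or the $K$-rational points for these specific fields, on $X_1(13)$, $X_1(16)$, $X_1(18)$, $X_1(2,10)$ and $X_1(2,12)$. The Weil-pairing eliminations are immediate and the genus-one levels reduce to rank computations over $K$ that are essentially mechanical; but controlling the genus-two curves requires explicit models, a handle on the Mordell--Weil ranks of the relevant Jacobians, and careful bookkeeping of the CM points. The CM points are also the source of the genuinely new groups in the target lists: curves with CM by $\Z[\zeta_3]$ realize $C_3 \oplus C_3$ and $C_3 \oplus C_6$ over $\Q(\sqrt{-3})$, while $C_4 \oplus C_4$ is realized over $\Q(i)$. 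Once the point sets are determined, assembling the two lists and exhibiting an explicit curve for each surviving group completes the proof.
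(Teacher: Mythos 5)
First, a structural point: this paper does not prove Theorem \ref{mainlist} at all --- it is imported from Najman's papers \cite{Naj10}, \cite{naj111} --- so your attempt can only be compared with the cited source, not with an in-text argument. Measured against that source, your outline is essentially a faithful reconstruction of Najman's actual strategy: take the quadratic classification \eqref{eq:kamiennygroups} as the ambient finite list, kill the full-level structures by the Weil pairing (your eliminations of $C_4 \oplus C_4$ over $\Q(\sqrt{-3})$ and of $C_3 \oplus C_3$, $C_3 \oplus C_6$ over $\Q(i)$ are correct and immediate), eliminate the remaining groups by computing points on $X_1(N)$ and $X_1(2,2n)$ over the two specific fields, and exhibit explicit curves for the survivors. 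Two refinements are worth recording. First, $X_1(2,10)$ and $X_1(2,12)$ are themselves elliptic curves over $\Q$, so they belong with your genus-one levels $N=11,14,15$: one computes $\rk X(K) = \rk X(\Q) + \rk X^{(d)}(\Q)$ (the same twist decomposition this paper uses in the proof of Proposition \ref{mainprop}) and enumerates the torsion points; no symmetric-square machinery is needed there. Second, for the genus-two levels $X_1(13)$, $X_1(16)$, $X_1(18)$, determining the complete set of quadratic points, as you propose, is the hardest version of the problem --- note that $C_{13}$, $C_{16}$, $C_{18}$ \emph{do} occur over other quadratic fields, so non-cuspidal quadratic points genuinely exist and your computation would have to sort them by field of definition. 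Najman instead works over the fixed field $K$: the Jacobian again has rank $0$ over $K$ by the twist decomposition, its torsion over $K$ is controlled by reduction at small primes, and the finitely many $K$-points are listed directly; this targeted computation is substantially lighter than a full quadratic-point determination of Chabauty/symmetric-square type (though the latter is viable and is in fact the technique this paper deploys for $X_0(21)$ in Proposition \ref{21-cycle}). One small inaccuracy: realizing the new groups does not rest on CM theory --- a single explicit curve per group suffices, as in Tables \ref{table:last2} and \ref{table:last1} --- and, e.g., full $3$-torsion over $\Q(\sqrt{-3})$ occurs for non-CM curves as well. With those adjustments your plan is correct and matches the cited proof in all essentials.
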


Note that $C_3 \oplus C_3$ and $C_3 \oplus C_6$ are only realized over $\Q(\sqrt{-3})$ and
$C_4 \oplus C_4$ is only realized over $\Q(i)$ since they contain a root of unity for $3$ and $4,$ respectively. 

\subsection*{Outline} In section 2, we collect the known results for number fields used in the sequel. In section 3, we prove Theorem \ref{mainthm1} for which we use the Gal$(L/K)$-action on $E(L)_{\text{tor}}.$ The Galois action puts certain constraints on $E(L)_{\text{tor}}$ depending only on the structure of $E(K)_{\text{tor}}.$ In section 4, given a quadratic cyclotomic field $K$, we study $K$-rational points lying on certain modular curves $X_0(N),$ which will play a key role in determining the list of possible torsion groups upon quadratic base change. In section $5$, we prove our main results Theorem \ref{mainthm2} and \ref{burtonmainthm}. In section 6, we study growth for an arbitrary elliptic curve $E$ over a quadratic number field $K$ (no restrictions on its $2$-torsion part). By relying upon Magma to factor division polynomials over quadratic number fields, as well as compute torsion over quartic number fields, we describe a general algorithm which determines all possible growths of any prescribed torsion group upon quadratic base change. More precisely, given an elliptic curve $E/K,$ and a complete list of all possible torsion groups appearing as $E(K)_{\text{tor}}$ (as in Theorem \ref{mainlist}), the algorithm determines all quadratic extensions $L/K$ in which $E(K)_{\text{tor}}$ grows and describe $E(L)_{\text{tor}}$ in each case. One can find the code available \href{https://github.com/BaseChangeTorsion/The-Effect-of-Quadratic-Base-Change-on-Torsion}{{\bf{here}}} for such an implementation in Magma. 

\subsection*{Notations} Given an elliptic curve $E$ over a number field $K$, let $\overline{K}$ denote its algebraic closure, $E[n]=\{P \in E(\overline{K}): nP=0\}$ the group of $n$-torsion points, $E(K)[p^{\infty}]$ the Sylow $p$-subgroup of $E(K)$ for prime $p,$ and $E^d$ the quadratic twist of $E$ by $d\in K.$ And the model defined by $E: y^2 = x^3+ax+b$ is denoted by $[a,b]$.

\subsection*{Acknowledgement} We are indebted to Andreas Schweizer for allowing us to use the results from his ongoing project with the first author and a multitude of e-mail responses during the preparation of this manuscript. We are grateful to Michael Stoll for his help on mathoverflow regarding Proposition \ref{21-cycle} as well as thankful to the referees for their valuable comments.

\section{Auxiliary Results}

We begin by setting the necessary background to understand the results of this paper. The next result is due Kwon, which allows us to bound the quadratic growth of torsion over number fields. To state his result, we will make remarks and introduce some notation.  

\begin{rem}
    Let $K$ be a number field and let $E$ be an elliptic curve over $K$ given by $y^2=x^3+Ax+B$ with $A,B \in K.$ If $L=K(\sqrt{d})$ is a quadratic extension of $K$ with non-square $d\in K,$ then $\Gal(L/K)$ that is generated by $\sigma,$ acts on $E(K)_{\text{tor}}$ in an obvious way. For $P \in E(L)_{\text{tor}},$ let $Q=(x,y)$ be the point $P-\sigma(P).$ If $P \in E(K)_{\text{tor}}$ then $Q=0.$ Otherwise it follows from 
$(\sigma(x),\sigma(y))=\sigma(Q)=-Q=(x,-y)$
that $x,y/\sqrt{d}$ are in $K$ and so $(x,y/\sqrt{d})$ lies on its quadratic twist $E^d$ given by $dy^2=x^3+Ax+B$. Note that $P-\sigma(P)$ in $E^d(K)$ is understood as $(x,y/\sqrt{d})$ in the proof of the following proposition.
\end{rem}

\begin{prop}(\cite{Kwo97})\label{injection}
Let $K$ be a number field, $L=K(\sqrt{d})$ for $d \in K$ a non-square, and let $\sigma$ denote the generator of $Gal(L/K)$. There is a homomorphism $h$ defined by
\begin{align*}
 E(L)_{\text{tor}} &\xrightarrow{h} E^d(K)_{\text{tor}} \\
 P &\mapsto P- \sigma(P)
\end{align*}
with ker$(h)=E(K)_{\text{tor}}$ and it induces an injection 
$E(L)_{\text{tor}}/E(K)_{\text{tor}} \hookrightarrow E^{d}(K)_{\text{tor}}.$
\end{prop}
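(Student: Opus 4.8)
The plan is to exhibit $h$ as a composite of two group homomorphisms and then read off the kernel from the first isomorphism theorem, so that the injection becomes formal. Write $\delta\colon E(L)\to E(L)$ for the map $P\mapsto P-\sigma(P)$, and let $A=\{Q\in E(L):\sigma(Q)=-Q\}$ be the subgroup of $\sigma$-anti-invariant points. First I would record that $\sigma$ acts on $E(\overline{K})$ as a group automorphism: since the coefficients $A,B$ of the Weierstrass model lie in $K$, which is fixed by $\sigma$, the addition formulas are $\sigma$-equivariant, whence $\sigma(P+P')=\sigma(P)+\sigma(P')$. Because $E(L)$ is abelian, it follows at once that $\delta$ is a homomorphism; moreover $\sigma\bigl(\delta(P)\bigr)=\sigma(P)-\sigma^2(P)=\sigma(P)-P=-\delta(P)$ using $\sigma^2=\id$, so the image of $\delta$ lies in $A$, and $\delta(P)$ is torsion whenever $P$ is.

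Next I would make the identification of $A$ with $E^d(K)$ precise. The model $E^d\colon dy^2=x^3+Ax+B$ is isomorphic to $E$ over $L$ via $\phi\colon E^d\to E$, $(x,y)\mapsto(x,\sqrt{d}\,y)$; this is an isomorphism of the underlying curves carrying the point at infinity to the point at infinity, hence, being base-point preserving, a group isomorphism. Its inverse sends $(X,Y)\mapsto(X,Y/\sqrt{d})$, and the computation recorded in the preceding Remark---for $Q=(x,y)\in A$ one has $x\in K$ and $y/\sqrt{d}\in K$---shows exactly that $\phi^{-1}$ restricts to an isomorphism $A\xrightarrow{\sim}E^d(K)$, surjectivity being the observation that $\phi(u,v)=(u,\sqrt d\,v)$ is anti-invariant for every $(u,v)\in E^d(K)$. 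Setting $h=\phi^{-1}\circ\delta$ then produces a group homomorphism $E(L)\to E^d(K)$ that carries torsion to torsion, giving $h\colon E(L)_{\text{tor}}\to E^d(K)_{\text{tor}}$ exactly as claimed, with the ad hoc interpretation of $P-\sigma(P)$ as the point $(x,y/\sqrt d)$ justified by $\phi^{-1}$.

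Finally I would compute the kernel. Since $\phi^{-1}$ is injective, $h(P)=O$ is equivalent to $\delta(P)=O$, i.e. $\sigma(P)=P$; as the fixed field of $\sigma$ is $K$, a point of $E(L)$ is $\sigma$-fixed precisely when both its coordinates lie in $K$, that is when $P\in E(K)$. Intersecting with $E(L)_{\text{tor}}$ yields $\ker(h)=E(K)_{\text{tor}}$, and the first isomorphism theorem furnishes the injection $E(L)_{\text{tor}}/E(K)_{\text{tor}}\hookrightarrow E^d(K)_{\text{tor}}$. The only step carrying genuine content---everything else being formal descent already prepared by the Remark---is the claim that $\phi$ is an isomorphism of \emph{groups} rather than merely a bijection of point sets; I expect this to be the main (though mild) obstacle, and I would dispatch it by invoking the standard fact that a base-point-preserving isomorphism of elliptic curves is automatically a group isomorphism, applied to the change of variables $y\mapsto\sqrt{d}\,y$.
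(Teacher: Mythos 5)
Your proof is correct and follows essentially the same route as the paper, which simply cites \cite[Proposition 1]{Kwo97} and records the key descent computation in the preceding Remark: your identification of the $\sigma$-anti-invariant subgroup with $E^d(K)$ via $(x,y)\mapsto(x,y/\sqrt{d})$ is exactly that Remark's calculation, made into a clean factorization $h=\phi^{-1}\circ\delta$. The remaining steps (homomorphism property, $\ker(h)=E(K)_{\text{tor}}$ from the fixed field of $\sigma$, and the first isomorphism theorem) are the standard content of Kwon's argument, so there is nothing to object to.
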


\begin{proof}
See the proof when $K=\Q$ in \cite[Proposition 1]{Kwo97}.
\end{proof}

It is already known that the odd-order torsion part of $E(L)_{\text{tor}}$ can be well-understood by only studying two torsion groups which occur over $K.$

\begin{lem}(\cite{GJT14})\label{lem1}
	If n is an odd positive integer we have
	$$ E(K(\sqrt{d}))[n] \simeq E(K)[n] \oplus E^d(K)[n]. $$
\end{lem}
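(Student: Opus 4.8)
The plan is to use Kwon's homomorphism from Proposition \ref{injection} together with its twist to pin down the odd-order part of $E(L)_{\text{tor}}$ exactly. Let $n$ be odd and write $\sigma$ for the generator of $\Gal(L/K)$. The decisive structural fact is that on the $n$-torsion the averaging/anti-averaging operators behave well precisely because $2$ is invertible modulo $n$. I would work inside the $\Z/n$-module $E[n]$ on which $\sigma$ acts, and split it into its $+1$ and $-1$ eigenspaces under the involution $\sigma$.

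\smallskip

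First I would observe that $E(L)[n] = E[n]^{\langle \sigma \rangle}$ is the fixed submodule of $E[n]$, and recall the Remark preceding Proposition \ref{injection}: for $P \in E(L)[n]$ the point $P - \sigma(P)$ naturally lands in $E^d(K)[n]$, while the point $P + \sigma(P)$ is $\sigma$-fixed and hence lies in $E(K)[n]$. I would then define the two maps
\begin{align*}
\pi_+ \colon E(L)[n] &\to E(K)[n], & P &\mapsto P + \sigma(P),\\
\pi_- \colon E(L)[n] &\to E^d(K)[n], & P &\mapsto P - \sigma(P),
\end{align*}
and combine them into $\pi = (\pi_+,\pi_-)\colon E(L)[n] \to E(K)[n] \oplus E^d(K)[n]$. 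Conversely, a point $R \in E(K)[n]$ gives $R \in E(L)[n]$ with $\pi_-(R)=0$, and a point in $E^d(K)[n]$, reinterpreted via the twist isomorphism over $L$ as a point of $E[n]$ on which $\sigma$ acts by $-1$, gives an element with $\pi_+ = 0$; these furnish a map in the reverse direction.

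\smallskip

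The key step is to exhibit mutually inverse maps, and here is where oddness of $n$ is essential. Since $\gcd(2,n)=1$, multiplication by $2$ is invertible on $E[n]$; let $\tfrac12$ denote its inverse (i.e.\ multiplication by the inverse of $2$ modulo $n$). Given $P\in E(L)[n]$ one recovers $P = \tfrac12\big((P+\sigma(P)) + (P - \sigma(P))\big) = \tfrac12\pi_+(P) + \tfrac12\pi_-(P)$, which shows $\pi$ is injective and that $E(L)[n]$ is the internal direct sum of the $\sigma$-fixed part $E(K)[n]$ and the $\sigma$-anti-fixed part. Identifying the latter with $E^d(K)[n]$ through the twist (the point $(x,y)$ with $\sigma(x,y)=(x,-y)$ corresponds to $(x,y/\sqrt d)\in E^d(K)$, exactly as in the Remark) yields the claimed isomorphism $E(L)[n]\simeq E(K)[n]\oplus E^d(K)[n]$.

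\smallskip

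The main obstacle, and the only place demanding care, is the twist identification: one must check that reinterpreting a $\sigma$-anti-invariant $n$-torsion point of $E$ as an $n$-torsion point of $E^d$ is a group isomorphism onto $E^d(K)[n]$ that is compatible with the splitting, rather than merely a set bijection. This amounts to verifying that the standard change of coordinates $(x,y)\mapsto(x,y/\sqrt d)$ sending $E$ to $E^d$ over $L$ intertwines the group laws and carries the $(-1)$-eigenspace of $\sigma$ on $E[n]$ isomorphically onto $E^d(K)[n]$; once that compatibility is in place, the eigenspace decomposition from invertibility of $2$ does the rest. I would note that this is essentially the content already recorded in the Remark, so the proof reduces to assembling these observations, which is why I expect it to be short.
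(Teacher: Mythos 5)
Your proof is correct and is essentially the standard argument: the paper itself gives no proof of Lemma \ref{lem1}, deferring to \cite{GJT14}, and the argument there is exactly your decomposition $P=\tfrac12(P+\sigma(P))+\tfrac12(P-\sigma(P))$ into $(\pm1)$-eigenspaces of $\sigma$ on $E(L)[n]$, valid because $2$ is invertible modulo odd $n$, combined with the twist identification $(x,y)\mapsto(x,y/\sqrt{d})$ of the $(-1)$-eigenspace with $E^d(K)[n]$ as in the Remark before Proposition \ref{injection}. One cosmetic slip: your opening identity should read $E(K)[n]=E(L)[n]^{\langle\sigma\rangle}$ (equivalently $E(L)[n]=E[n]^{\Gal(\overline{K}/L)}$) rather than $E(L)[n]=E[n]^{\langle\sigma\rangle}$, since $\sigma\in\Gal(L/K)$ acts only on $L$-rational points; this does not affect the argument, which correctly takes place inside $E(L)[n]$.
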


%\section{Restrictions on Growth of Odd-order Torsion}

The following theorem lists various restrictions on growth in quadratic extensions.  

\begin{thm}\label{burton} 
    Let $K$ be a number field, $E/K$ an elliptic curve, L a quadratic extension of $K$ and $p$ an odd prime.
    \begin{enumerate}[(i)]
        \item \label{2-part} If $E(K)[2]$ is trivial, then $E(L)[2]$ is trivial.
        \item \label{2-ptTwist} If $d\in K$, $d \neq 0$ then $E(K)[2]\simeq E^d(K)[2].$
        \item \label{3x3} If $E(K)[p]$ is trivial and $E(L)[p]=C_p \oplus C_p$ then $K$ contains a primitive $p$th root of unity.
        \item If $E(K)[p]\simeq C_p$ and $E(L)[p^{\infty}] \neq E(K)[p^{\infty}]$ then $E(L)[p] \simeq C_p \oplus C_p.$
        \item \label{p-pxp} If $E(K)[p]\simeq C_p$ and $E(L)[p] \simeq C_p \oplus C_p$ then $K$ does not contain a primitive $p$th root of unity. 
        \item If $E(K)[p] \simeq C_p \oplus C_p$ then $E(L)[p^{\infty}]=E(K)[p^{\infty}].$
    \end{enumerate}
\end{thm}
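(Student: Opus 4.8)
The plan is to reduce the statement to the vanishing of the $p$-power torsion of the quadratic twist $E^d$ over $K$, and then to feed this into Proposition \ref{injection}. First I would exploit the hypothesis $E(K)[p]\simeq C_p\oplus C_p$, which says that the full $p$-torsion $E[p]$ is already $K$-rational, hence in particular $L$-rational. Writing $\psi\colon E\xrightarrow{\sim} E^d$ for the twisting isomorphism, which is defined over $L=K(\sqrt d)$, we get $E^d[p]=\psi(E[p])\subseteq E^d(L)$, so that $E^d(L)[p]=E^d[p]\simeq C_p\oplus C_p$.

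Next I would apply Lemma \ref{lem1} to the curve $E^d$ (with the same non-square $d$) and the odd integer $n=p$. Since $(E^d)^d\simeq E$ over $K$, this yields $E^d(L)[p]\simeq E^d(K)[p]\oplus E(K)[p]$. Comparing orders gives $p^2=\lvert E^d(K)[p]\rvert\cdot p^2$, which forces $E^d(K)[p]=0$. It follows that $E^d(K)$ has no $p$-torsion whatsoever: a point of $p$-power order would, after multiplication by a suitable power of $p$, produce a point of exact order $p$, contradicting $E^d(K)[p]=0$. Hence $E^d(K)[p^{\infty}]=0$.

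Finally I would invoke the injection of Proposition \ref{injection}. Restricting the homomorphism $h\colon P\mapsto P-\sigma(P)$ to Sylow $p$-subgroups makes sense, since $h$ sends a point of $p$-power order to a point of $p$-power order and has kernel $E(K)_{\text{tor}}$; thus it induces $E(L)[p^{\infty}]/E(K)[p^{\infty}]\hookrightarrow E^d(K)[p^{\infty}]=0$. Therefore $E(L)[p^{\infty}]=E(K)[p^{\infty}]$, which is the desired conclusion. (Alternatively, one bypasses Proposition \ref{injection} entirely: applying Lemma \ref{lem1} to $E$ at each level $n=p^k$ gives $\lvert E(L)[p^k]\rvert=\lvert E(K)[p^k]\rvert\cdot\lvert E^d(K)[p^k]\rvert=\lvert E(K)[p^k]\rvert$, and since $E(K)[p^k]\subseteq E(L)[p^k]$ are finite of equal order they coincide.)

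I expect the only delicate point to be the step forcing $E^d(K)[p]=0$. Conceptually this is the statement that, once $E[p]$ is $K$-rational (so that $\mu_p\subseteq K$ by the Weil pairing), the Galois module $E^d[p]\cong E[p]\otimes\chi_d$ is acted on by $\Gal(\overline{K}/K)$ through the nontrivial quadratic character $\chi_d$ of $L/K$, which admits no nonzero fixed vector in an odd-order group. The order-counting argument via Lemma \ref{lem1} packages this cleanly and avoids invoking the twisting relation on representations directly; every remaining step is then formal.
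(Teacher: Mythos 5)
There is a genuine gap, and it is structural rather than local: the statement you were asked to prove is a six-part theorem, but your proposal proves only part (vi). Parts (i)--(v) are never addressed. The paper dismisses (i) and (ii) as elementary (a cubic with no root in $K$ stays rootless in a quadratic extension, and twisting preserves the $2$-division polynomial up to scaling), proves (iii) by combining Lemma \ref{lem1} with the Weil pairing applied to $E^d(K)[p]\simeq C_p\oplus C_p$, and proves (iv) by noting via Lemma \ref{lem1} that $E^d(K)[p]$ is nontrivial but cannot be $C_p\oplus C_p$ (else $C_p\oplus C_p\oplus C_p\subseteq E(L)[p]$). Most importantly, part (v) requires a tool entirely absent from your plan: one chooses a basis of $E(L)[p]$ in which $\rho(\sigma)$ is upper-triangular with diagonal entries $1$ and $\chi$; the hypothesis $\mu_p\in K$ forces $\chi=\det\rho(\sigma)\equiv 1\pmod p$ via $\sigma(\mu_p)=\mu_p^{\det\rho(\sigma)}$, and $\sigma^2=1$ together with $p$ odd kills the off-diagonal entry, so $\sigma$ acts trivially on $E[p]$, contradicting $E(K)[p]\simeq C_p$. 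Neither Lemma \ref{lem1} nor Proposition \ref{injection} can see the hypothesis $\mu_p\in K$, so (v) genuinely cannot be extracted from the machinery your proposal runs on; your closing remark about $E[p]\otimes\chi_d$ gestures in this direction but is never developed into an argument.

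For the one part you do treat, the argument is correct and is in essence the paper's own one-line proof of (vi), reached by a harmless detour. The paper applies Lemma \ref{lem1} directly to $E$: if $E(L)[p^\infty]\neq E(K)[p^\infty]$ then $E^d(K)[p]\not\simeq C_1$, whence $C_p\oplus C_p\oplus C_p\subseteq E(L)[p]$, impossible since $E[p]\simeq C_p\oplus C_p$ over $\overline{K}$. You instead transport $E[p]\subseteq E(K)$ through the twisting isomorphism over $L$ to get $E^d(L)[p]=E^d[p]$, apply Lemma \ref{lem1} to $E^d$ using $(E^d)^d\simeq E$, and conclude $E^d(K)[p]=0$; your order count at each level $p^k$ (bypassing Proposition \ref{injection}) is clean and valid, including the restriction of $h$ to $p$-primary parts, whose kernel is indeed $E(K)_{\mathrm{tor}}\cap E(L)[p^\infty]=E(K)[p^\infty]$. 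So the verdict is: a correct proof of one sixth of the theorem, with the hardest sixth, part (v), requiring an idea your approach does not contain.
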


\begin{proof}
    Parts $(i)$ and $(ii)$ are easily verified.

$(iii)$ Suppose $E(K)[p]$ is trivial.
By Lemma \ref{lem1}, it follows that $E^d(K)[p] \simeq C_p \oplus C_p$ so by the Weil pairing %\cite[Cor 8.1.1]{Sil09} 
we conclude that $K$ contains a primitive $p$th root of unity.

$(iv)$ Let $m$ the largest positive integer such that there is an element of order $p^m$ in $E(L)_{tor}$. We have   $E(L)[p^{m}]= E(K)[p^{m}] \oplus E^d(K)[p^{m}]$ by Lemma \ref{lem1}. If 
$E(L)[p^{m}]\ne E(K)[p^{m}]$ then $E^d(K)[p^{m}] \not \simeq C_1$ so $E^d(K)[p]$ $\not \simeq C_1$.  Hence $E^d(K)[p] \simeq C_p$ or $C_p \oplus C_p.$ 
%by  \cite[Cor 6.4]{Sil09}.  
In the latter case this would yield $E(L)[p] \simeq C_p \oplus C_p \oplus C_p$ which gives a contradiction, %\cite[Cor 6.4]{Sil09}, 
so $E(L)[p] \simeq C_p \oplus C_p $.

$(v)$ Let $\mu_p$ be a primtive $p$th root of unity. Suppose $E(K) \simeq C_p$, and $E(L)[p] \simeq C_p \oplus C_p$.  Let $\sigma  \in $ Gal($L/K$) be nontrivial.  We can choose a basis for $E(L)[p]$ such that the induced Galois representation satisfies
$$ \rho: \mbox{Gal(L/K)} \rightarrow \mbox{Gl}_2(\BZ / p \BZ)$$
$$\sigma \mapsto 
\begin{bmatrix}
1 & \alpha\\
0 & \chi 
\end{bmatrix}$$
for some $\chi \in (\BZ / p \BZ)^{*}$, $\alpha \in \BZ / p \BZ$. If $\mu_p \in K$ then $\mu_p = \sigma(\mu_p) = (\mu_p)^{det(\rho(\sigma))}=(\mu_p)^{\chi}$
so $\chi = 1$ mod $p$.  As $\sigma^2=1$, $(\rho(\sigma))^2=1$ so $2\alpha = 0$. As $p$ is odd, we  conclude $\alpha=0$, so $\rho(\sigma)$ is the identity.  This means $\sigma$ acts trivially on the p-torsion, so $E(K)[p] \simeq C_p \oplus C_p$, contradicting our hypothesis.

$(vi)$ Suppose $E(K)[p] \simeq C_p \oplus C_p$. By Lemma \ref{lem1}, if $E(L)[p^{\infty}] \ne  E(K)[p^{\infty}]$ then $E^d(K)[p^{\infty}] \not \simeq C_1$ so $E^d(K)[p]$ $\not \simeq C_1$.  Hence $C_p \oplus C_p \oplus C_p \subseteq E(L)[p]$, a contradiction.
%contradicting \cite[Cor 6.4]{Sil09}.
\end{proof}

\section{Restrictions on Growth}

In this section, we focus on the effect of quadratic base change on $E(K)_{\text{tor}}$ when $E$ varies over all elliptic curves defined over a quadratic number field $K.$ The strategy is to view how Gal$(L/K)$ acts on $E(L)_{\text{tor}}$ when $[L:K]=2$ and to conclude that this forces certain properties on $E$ over $K$. For certain structures of $E(L)_{\text{tor}}$, one would get properties of $E$ over $K$ that would violate known results about elliptic curves over $K$, thus ruling out the possibility of that particular torsion group $E(L)_{\text{tor}}.$ Before proceeding further, we need a general lemma to narrow down certain growth in the proof of Theorem \ref{mainthm1}.

\begin{lem}\label{generallemma4}
Let $K$ be a number field and let $E/K$ be any elliptic curve with $E(K)[2] \simeq C_2.$ If $E(L)$ contains a subgroup of the form $C_2 \oplus C_4$ where $L=K(\sqrt{d})$ for $d \in K$ a non-square, then both $E$ and its quadratic twist $E^d$ have a $K$-rational $4$-torsion point.
\end{lem}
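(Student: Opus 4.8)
The plan is to exploit the $\Gal(L/K)$-action, splitting an order-$4$ point into its invariant and anti-invariant parts. Since $C_2 \oplus C_4 \subseteq E(L)$, the full $2$-torsion $E[2] \simeq C_2 \oplus C_2$ is already defined over $L$; I would write the given subgroup as $\langle A \rangle \oplus \langle B \rangle$ with $A$ of order $2$ and $B$ of order $4$, so that the three nonzero $2$-torsion points are $A$, $2B$ and $A + 2B$. Let $\sigma$ generate $\Gal(L/K)$ and let $T$ be the unique generator of $E(K)[2] \simeq C_2$. Because exactly one nonzero $2$-torsion point is $K$-rational, $\sigma$ fixes $T$ and interchanges the remaining two. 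The key principle is that $B + \sigma B$ is $\sigma$-invariant, hence lies in $E(K)$, while $B - \sigma B$ is anti-invariant, since $\sigma(B - \sigma B) = -(B - \sigma B)$, and therefore, by the Remark preceding Proposition \ref{injection}, corresponds under the twisting isomorphism to a point of $E^d(K)$ of the same order. So it suffices to produce an invariant and an anti-invariant point of order exactly $4$.

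First I would treat the generic case $2B \neq T$. Here $2B$ is one of the two swapped $2$-torsion points, so $\sigma(2B) = T + 2B$, and a one-line computation gives
\[
2(B + \sigma B) = 2B + \sigma(2B) = T + 4B = T, \qquad 2(B - \sigma B) = 2B - \sigma(2B) = -T = T.
\]
Since $T \neq 0$, both $B + \sigma B \in E(K)$ and the $E^d(K)$-point attached to $B - \sigma B$ have order exactly $4$, which is the desired conclusion in this case.

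The main obstacle is the degenerate case $2B = T$, in which $B + \sigma B$ may collapse to order $2$ and the computation above is useless. To handle it I would first pin down the action on $B$: from $\sigma^2 = 1$ together with $2\sigma B = \sigma(2B) = \sigma(T) = T = 2B$ one gets $\sigma B - B \in E[2]$ and in fact $\sigma B = \pm B$, noting $-B = B + T$ here, while the swapped pair $\{A, A + T\}$ forces $\sigma A = A + T$. The remedy is the auxiliary point $A + B$: a direct check shows that exactly one of $B$ and $A + B$ is $\sigma$-fixed, hence lies in $E(K)$, and the other is anti-invariant, hence yields a point of $E^d(K)$; moreover both have order $4$ because $2B = 2(A+B) = T \neq 0$. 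This produces the required $4$-torsion points in $E(K)$ and in $E^d(K)$ simultaneously and completes the argument. Throughout, the only inputs beyond elementary group theory are the identification of the anti-invariant part with $E^d(K)$ from the Remark and, as a consistency check, the fact from Theorem \ref{burton}(\ref{2-ptTwist}) that $E^d(K)[2] \simeq C_2$ as well.
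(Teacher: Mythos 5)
Your proof is correct and takes essentially the same route as the paper's: your case split $2B=T$ versus $2B\neq T$ is equivalent to the paper's split $\sigma(P)=\pm P$ versus $\sigma(P)\neq \pm P$, and both arguments produce the $K$-rational $4$-torsion point on $E$ from a $\sigma$-invariant combination ($B+\sigma B$, or one of $B$, $A+B$ in the degenerate case) and the one on $E^d$ from an anti-invariant combination via the Remark before Proposition \ref{injection}. The only difference is cosmetic: in the generic case you verify order $4$ directly from $2(B\pm\sigma B)=T$ using $\sigma(2B)=T+2B$, where the paper instead eliminates the order-$2$ degenerations by contradiction.
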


\begin{proof}
Suppose $C_2 \oplus C_4 \subseteq E(L)$ such that $C_2 \oplus C_4 = \langle Q, P \rangle$ where $Q$ is a point of order $2$ and $P$ is a point of order $4.$ Let $\sigma$ denote the non-trivial element of Gal$(L/K).$  

If $\sigma(P)=P$ or $-P$, then $\sigma(2P)=2P$, so $2P$ is the unique $K$-rational $2$-torsion point. Then $\sigma(Q)=Q+2P$ and so 
$\sigma$ maps  $P+Q$ to its inverse or to itself depending on $\sigma(P)=P$ or $-P$ from which the statement follows.
\par
If $\sigma(P)$ is different from $P$ and 
$-P$, then $\sigma(P)+P, \sigma(P)-P$ are both non-trivial. Note that $\sigma(P)+P \in E(K)[4]$ and  $\sigma(P)-P \in E^d(K)[4].$ Our aim is to show that both have exactly order $4.$ If they have order $2,$ then $\sigma(P)+P$ and $\sigma(P)-P$ are both the unique $K$-rational $2$-torsion point, and hence they must be equal, which gives the contradiction $2P=0$. Now assume that one of them has order $4.$ By arguing similarly, we may assume that $\sigma(P)+P$ has order $4$ but $\sigma(P)-P$ has order $2.$ Since $\sigma(P)\neq P,-P,$ this leaves only $\sigma(P)-P=Q$ or $Q+2P.$ In both cases, $\sigma(P)+P$ must have order $2$ which is a contradiction by assumption.  
\end{proof}

\subsection*{Proof of Theorem \ref{mainthm1}}
$(i)$ An immediate corollary of Lemma \ref{generallemma4}.

$(ii)$ It follows from Lemma \ref{lem1} that $E(K)$ or $E^d(K)$ contains a point of order $p^k.$ By Theorem \ref{burton} $E(K)$ must have a point of order $2,$ so does $E^d(K)$. Therefore, either $E$ or $E^d$ has a point of order $2p^k$ defined over $K.$

$(iii)$ Suppose $E(L)_{\text{tor}} \simeq C_4 \oplus C_8.$ By Lemma \ref{generallemma4} $E^{d}(K)_{\text{tor}} \simeq C_4$ or $C_8.$ On the one hand, we have $|E(L)_{\text{tor}}/E(K)_{\text{tor}}| = 8$ by assumption. On the other hand, $E(L)_{\text{tor}}/E(K)_{\text{tor}}$ is cyclic since it embeds into $E^{d}(K)_{\text{tor}}$ by Proposition \ref{injection}. This forces $E^{d}(K)_{\text{tor}}$ to be isomorphic to $C_8$. But we claim that $E(L)_{\text{tor}}/E(K)_{\text{tor}}$ cannot have a point of order $8$. In detail, it is equivalent to showing that its embedding $h(E(L)_{\text{tor}}/E(K)_{\text{tor}}) \subseteq E^d(K)_{\text{tor}}$ has no element of order $8$. Let $E(L)_{\text{tor}} = \langle Q,R\rangle $ where $Q$ is of order $4$ and $R$ is of order $8.$ Let Gal$(L/K)=\langle \sigma \rangle$ and let $P\in E(L)$ be any point of order $8.$ Then $P=a_1Q + b_1R$ and $\sigma(P)=a_2Q+b_2R$ where both $a_1,a_2$ are odd. But this implies $P-\sigma(P)$ has order at most $4$ which gives a contradiction.

$(iv)$ \cite[Theorem 5(iv)]{GJT14}.

$(v)$ Assume the hypothesis. Fix a $2$-torsion point $Q$ and a $16$-torsion point $P$ such that $E(L)_{\text{tor}}=\langle Q, P \rangle.$ Let $\sigma$ be the non-trivial element of Gal$(L/K).$

\textit{Case 1} : $2P$ is $K$-rational. Note that $\sigma$ maps $P$ to a $16$-torsion point. So,  $\sigma(P)=aP$ or $aP+Q$ where $a$ is odd. Since $P + \sigma(P)$ is fixed by $\sigma$, we have $\sigma(P)=aP$. Otherwise, $Q\in E(L)[2]$ would be $K$-rational. Moreover, $\sigma(2P)=2P$ which implies that $a= 1$ or $a=9$. If $a=1$, then $P \in E(K)$, a contradiction. If $a=9,$ then $P+Q$ is defined over $K$ and has order $16,$ a contradiction.

\textit{Case 2} : $2P$ is not $K$-rational. Then 
$E(K)[8]=\{bP + Q : b \in \{2,6, 10, 14\}\}$. So $4P$ and $12P$ are the $K$-rational $4$-torsion points. Hence the $K$-rational $2$-torsion point again is $8P$. Notice that $\sigma(P)=aP$ or $aP+Q$ where $a$ is odd. So $2P$ goes to $2aP$, and on the other hand to 
$$\sigma(2P)=\sigma(2P+Q) + \sigma(Q) = 2P +Q +Q+ 8P = 10P.$$ 
It follows that $a =5$ or $13$. If $\sigma(P)=5P$ or $13P$ then $P+Q+\sigma(P+Q)=14P$ or $6P$ is $K$-rational, respectively. But this is not possible since $2P$ is not $K$-rational. This leaves only the possibilities: $\sigma(P) = 5P + Q$ and $\sigma(P)=13P + Q$. 
For example, if $\sigma(P)=13P+Q$, we simply take $13P+Q$ as the new $P$, and call it $\tilde{P}$. Then $\sigma(\tilde{P})=5\tilde{P}+Q$. So we may assume
$\sigma(P)=5P+Q$ and $E(K)_{\text{tor}}=\langle 2P+Q \rangle.$ 

By the list in \eqref{eq:kamiennygroups} and Lemma \ref{generallemma4}, the only possibilities for $E^d(K)_{\text{tor}}$ are $C_4, C_8$ and $C_{16}.$ %If $E^d(K)_{\text{tor}}=\Z/8\Z,$ then we have
Note that there exists a short exact sequence defined as follows
\begin{align}\label{exact1}
 0 \rightarrow \text{ker}(\psi)\xrightarrow{i} E(L) \xrightarrow{\psi} E(K) \times E^d(K) \xrightarrow{\pi} \text{coker}(\psi) \rightarrow 0 
\end{align}
where $\psi$ maps $R = (x,y)$ to  
$$\psi(R) = (R+\sigma(R), \phi(R-\sigma(R),1/\sqrt{d}))\ \text{with}\ \phi(R,a)=(x,ay).$$ Restricting to the torsion part, ker$(\psi)$ is either trivial, $C_2$ or full 2-torsion. One can observe from the action of Gal$(L/K)$ that ker$(\psi)=\langle 8P \rangle.$ If $E^d(K)_{\text{tor}} \simeq C_8$ or $C_{16},$ then coker$(\psi)$ has exponent at least $4.$ But this contradicts \cite[Theorem 3]{GJT14}, proving the assertion.

$(vi)$ Applying Lemma \ref{lem1} and Lemma \ref{generallemma4}, $E(K)$ or $E^d(K)$ would have a $20$-torsion point, which gives a  contradiction by the list in \eqref{eq:kamiennygroups}.

$(vii)$ Suppose $E(K)_{\text{tor}}\simeq C_4$ but $E(L) \simeq C_2 \oplus C_{24}.$ Fix an $L$-rational $8$-torsion point $P$ and a $2$-torsion point $Q$ different from $4P$. Let $\sigma \in$ Gal$(L/K)$ be the non-trivial automorphism. Then $\sigma(P)$ is an $L$-rational point of order $8$, so is equal to one of the eight points: $aP$ or $aP+Q$ where $a$ is odd. This implies $\sigma(4P)=4P$, and hence $4P$ is the unique $K$-rational $2$-torsion point on $E$. Consequently, $\sigma(Q)=Q+4P$ from which one can observe that $\sigma(P)$ cannot be $aP+Q$; otherwise $\sigma$ would be an automorphism of order $4$. 

This leaves the case $\sigma(P)=aP$. If $a=1$, then $P$ is a $K$-rational $8$-torsion point, contradicting the assumption. If $a=5$, then $P+Q$ is fixed by $\sigma$ and it has order $8$, a contradiction. If $a=7$, then $E^d(K)$ contains a $8$-torsion point and also a $3$-torsion point by Lemma \ref{lem1}, so a $24$-torsion point, contradicting the list in \eqref{eq:kamiennygroups}. 
If $a=3$, then $P+Q$ is mapped its inverse and so we are back to the previous case.

$(viii)$ Let $C_{32}$ be a subgroup of $E(L)$. It follows from the Weil Pairing that $E(L)[32] \simeq C_M \oplus C_{32}$
where $M$ divides $8.$ Fix a generator $P$ of $C_{32}$ and a generator $Q$ of $C_M$. Let Gal$(L/K)=\langle \sigma \rangle$. Then, $\sigma(P)$ is a point of order $32$, so it is of the form $aP+bQ$ where $a$ is odd and $b$ could be $0$. 

If $a\equiv 1$ mod $4$, then $\sigma(P)+P$ is a point of order $16$ since $a+1$ is only divisible once by $2$ and the order of $Q$ is at most $8$. Note that $\sigma(P)+P$ is fixed by $\sigma,$ so is in $E(K).$ 
If $a\equiv 3$ mod $4$, then we consider the point $\sigma(P)-P \in E^d(K)$ that has order $16$ for the same reason above.

\section{$K$-rational points on $X_0(N)$}
There is an affine curve $Y_0(n)$ whose $K$-rational points classify isomorphism classes $[(E,C)]_K$ of pairs $(E,C)$ where $E/K$ is an elliptic curve and $C$  is a cyclic $\Gal(\overline{K}/K)$-invariant subgroup of $E(\overline{K})$ of order $N$, or an \textit{$N$-cycle}.  Two pairs $(E,C)$,$(E',C')$ are equivalent if and only if there is an isomorphism $f:E \rightarrow E'$ such that $f(C)= C'$.  By adding a finite number of points (called $\emph{cusps}$) to $Y_0(N)$ we obtain the projective curve $X_0(N)$.  The curve $X_0(N)$ has a model over $\mathbb{Q}$ and hence we have tools to study the set $X_0(N)(K)$ of all $K$-rational points on $X_0(N)$. 

\begin{lem}\label{n-cycle}
    Let $K$ be a quadratic field and $E/K$ an elliptic curve. If $F$ is a Galois extension of $K$ and $E(F)[n]\simeq C_n$ then $E$ has an $N$-cycle.
\end{lem}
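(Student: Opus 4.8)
The plan is to take the $n$-torsion subgroup $E(F)[n]$ itself as the candidate cycle (so here $N=n$), and to verify the three properties required of an $N$-cycle: it is cyclic, it has order $n$, and it is $\Gal(\overline{K}/K)$-invariant. The first two are immediate from the hypothesis $E(F)[n]\simeq C_n$, so the whole content of the lemma lies in checking Galois-invariance, and this is exactly where the assumption that $F/K$ is \emph{Galois} enters.

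First I would set $C := E(F)[n] = \{P \in E(F) : nP = 0\}$, which is a subgroup of $E(\overline{K})$ that is cyclic of order $n$ by assumption. Next I would fix an arbitrary $\tau \in \Gal(\overline{K}/K)$ and show $\tau(C) = C$. Since $E$ is defined over $K$, the automorphism $\tau$ commutes with the group law and with multiplication-by-$n$; hence $\tau$ permutes the points of $E(\overline{K})$ and preserves the condition $nP = 0$. Because $F/K$ is Galois, $\tau$ restricts to an automorphism of $F$, so $\tau$ maps $E(F)$ onto $E(F)$. Combining these two observations gives $\tau(C) = E(\tau(F))[n] = E(F)[n] = C$, which is the desired invariance.

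Having shown that $C$ is a cyclic $\Gal(\overline{K}/K)$-invariant subgroup of $E(\overline{K})$ of order $n$, I would conclude that $C$ is by definition an $n$-cycle, and hence $E$ has an $n$-cycle over $K$, as required.

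The argument is short, and the only point demanding care is the Galois-stability step: one must use that $\tau(F) = F$ for every $\tau \in \Gal(\overline{K}/K)$, which holds precisely because $F/K$ is Galois. Without this hypothesis $E(F)[n]$ need not be $\Gal(\overline{K}/K)$-stable, so this is where the assumption does its work; I do not anticipate any serious obstacle beyond making this stability explicit.
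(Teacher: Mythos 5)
Your proposal is correct and is essentially the paper's own argument: the single substantive point in both is that, since $F/K$ is Galois, every $\tau \in \Gal(\overline{K}/K)$ stabilizes $E(F)$ and hence the cyclic subgroup $E(F)[n]$, which is therefore a $\Gal(\overline{K}/K)$-invariant cyclic group of order $n$. The paper phrases this via a generator $P$ with $E(F)[n]=\langle P\rangle$ and checks $\sigma(P)\in\langle P\rangle$, while you argue directly with the subgroup $C=E(F)[n]$ — the same idea in slightly different clothing.
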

\begin{proof}
    Let $\{P,Q\}$ be a $C_n$-basis for $E[n].$ We may assume that $E(F)[n]=\langle P \rangle.$ For any $\sigma \in \Gal(\overline{K}/K),$ we have $\sigma(P) \in E[F][n]$ since $F/K$ is Galois. By assumption $\sigma(P) \in \langle P \rangle$ and hence $\langle P \rangle$ is $\Gal(\overline{K}/K)$-invariant as desired. 
\end{proof}

In the case $j=0, 1728$ over quadratic number fields, we use the technique from \cite{Lem}.

\begin{lem}\label{count}
Let p be a prime and $E/F_p$ an elliptic curve with model $y^2=x^3+Ax+B$.

\begin{enumerate}

\item If $A=0$ (i.e. $j(E)= 0$) and $p \equiv 2 \Mod 3$  then $|E(F_p)| = p+1$ and $|E(F_{p^2})| = (p+1)^2$.

\item If $B=0$ (i.e. $j(E)=1728$) and $p \equiv 3 \Mod 4$  then $|E(F_p)| = p+1$ and $|E(F_{p^2})| = (p+1)^2$.
\end{enumerate}
\end{lem}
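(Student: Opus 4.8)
The plan is to compute $|E(\mathbb{F}_p)|$ directly as a quadratic character sum, show it equals $p+1$ (so the trace of Frobenius vanishes), and then deduce $|E(\mathbb{F}_{p^2})|$ formally from the Frobenius eigenvalues. Write $\chi$ for the quadratic character of $\mathbb{F}_p^{*}$, extended by $\chi(0)=0$, so that the number of $y\in\mathbb{F}_p$ solving $y^2=t$ equals $1+\chi(t)$. Counting the affine points of $E$ and adding the point at infinity gives
\begin{equation*}
|E(\mathbb{F}_p)| \;=\; 1 + \sum_{x\in\mathbb{F}_p}\bigl(1+\chi(x^3+Ax+B)\bigr) \;=\; (p+1) + \sum_{x\in\mathbb{F}_p}\chi(x^3+Ax+B).
\end{equation*}
In both cases I will show the remaining character sum vanishes, which forces $|E(\mathbb{F}_p)|=p+1$ and hence $a_p=0$.

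For part (1) we have $A=0$, so the sum is $\sum_{x}\chi(x^3+B)$. Since $p\equiv 2\Mod 3$ we have $\gcd(3,p-1)=1$, so the cubing map $x\mapsto x^3$ is a bijection of $\mathbb{F}_p$. Reindexing by $u=x^3$ converts the sum into $\sum_{u\in\mathbb{F}_p}\chi(u+B)=\sum_{v\in\mathbb{F}_p}\chi(v)=0$, the last equality because a nontrivial multiplicative character sums to zero over all of $\mathbb{F}_p$. Hence $|E(\mathbb{F}_p)|=p+1$.

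For part (2) we have $B=0$, so the summand is $\chi(x^3+Ax)$, and under $x\mapsto -x$ it transforms by $\chi\bigl(-(x^3+Ax)\bigr)=\chi(-1)\,\chi(x^3+Ax)$. Because $p\equiv 3\Mod 4$ we have $\chi(-1)=(-1)^{(p-1)/2}=-1$. The involution $x\mapsto -x$ fixes only $x=0$, where the summand is $\chi(0)=0$, and pairs every other $x$ with $-x$, contributing $\chi(x^3+Ax)\bigl(1+\chi(-1)\bigr)=0$. Thus the sum again vanishes and $|E(\mathbb{F}_p)|=p+1$.

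Finally, in either case $a_p = p+1-|E(\mathbb{F}_p)| = 0$, so the Frobenius eigenvalues $\alpha,\beta$ are the roots of $T^2 - a_pT + p = T^2 + p$; hence $\alpha+\beta=0$ and $\alpha^2+\beta^2=(\alpha+\beta)^2-2\alpha\beta=-2p$. The standard zeta-function relation $|E(\mathbb{F}_{p^n})| = p^n + 1 - (\alpha^n+\beta^n)$ then yields $|E(\mathbb{F}_{p^2})| = p^2+1+2p = (p+1)^2$, as claimed. The argument is entirely elementary; the only points requiring care are the bijectivity of cubing in part (1) and the evaluation $\chi(-1)=-1$ in part (2), together with the implicit assumption $p>3$ that guarantees the given short Weierstrass equation defines an elliptic curve (which holds in our applications, where $p$ is a prime of good reduction). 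The passage from $\mathbb{F}_p$ to $\mathbb{F}_{p^2}$ is purely formal once $a_p=0$ is established, so the substantive content is the vanishing of the two character sums.
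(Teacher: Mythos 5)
Your proof is correct, and it diverges from the paper's only in how much it proves from scratch. The paper's proof of the counts over $\mathbb{F}_p$ consists of citations to Washington's text (\cite[Prop 4.33]{Was} for the case $A=0$, $p \equiv 2 \Mod 3$, and \cite[Thm 4.23]{Was} for $B=0$, $p \equiv 3 \Mod 4$), whereas you establish $|E(\mathbb{F}_p)|=p+1$ directly via the quadratic-character sum: the cubing bijection when $\gcd(3,p-1)=1$, and the pairing $x \leftrightarrow -x$ with $\chi(-1)=-1$ when $p \equiv 3 \Mod 4$. These are in fact the standard arguments underlying the cited results, so your version is the same mathematics made self-contained and elementary, at the cost of carrying out a computation the paper outsources. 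The second half is essentially identical in both: the paper invokes \cite[Thm 4.12]{Was} to write $|E(\mathbb{F}_{p^2})| = p^2+1-(\alpha^2+\beta^2)$ with $\alpha,\beta$ the roots of $x^2+p$, exactly your deduction from $a_p=0$, and both then compute $p^2+2p+1=(p+1)^2$. One small refinement: your caveat that $p>3$ is implicitly assumed is slightly stronger than necessary. In case (1), $p \equiv 2 \Mod 3$ already excludes $p=3$, and $p=2$ is excluded because $y^2=x^3+Ax+B$ is never smooth in characteristic $2$; in case (2), $p=3$ is allowed by the hypothesis and your argument goes through verbatim there, since it uses only that $p$ is odd (and the model with $A \neq 0$ is smooth in characteristic $3$).
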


\begin{proof}

If $A=0$  and $p \equiv 2 \Mod 3$  then $|E(F_p)| = p+1$ by \cite[Prop 4.33]{Was}.  If $B=0$ and $p \equiv 3 \Mod 4$  then $|E(F_p)| = p+1$ by \cite[Thm 4.23]{Was}.  In either case, $|E(F_{p^2})| = p^2+ 1-(\alpha^2+\beta^2)$ by \cite[Thm 4.12]{Was}, where $\alpha$ and $\beta$ are roots of $x^2+p$.  Hence,  
\begin{align*}
|E(F_{p^2})| & = p^2+ 1-(\alpha^2+\beta^2)\\
& =  p^2+ 1 - (-p-p)\\
& = p^2+2p+1\\
& = (p+1)^2.
\end{align*}
\end{proof}

\begin{prop}\label{cm_case}
Let $K$ be a quadratic field and let $E/K$ be any elliptic curve. If $j(E) = 0$ and $p > 3$ is a prime then $E(K)_{\text{tor}}$ has no element of order $p$. If $j(E) = 1728$ and $p > 2$ is a prime, then $E(K)_{\text{tor}}$ has no element of order $p$.
\end{prop}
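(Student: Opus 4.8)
The plan is to argue by contradiction using reduction at a carefully chosen prime, with Lemma \ref{count} supplying the point count. Suppose $E(K)$ has a point of order $p$, with $p>3$ in the case $j(E)=0$ and $p>2$ in the case $j(E)=1728$. Since $E$ has good reduction outside a finite set $S$ of primes of $K$, for any prime $\mathfrak p\notin S$ lying over a rational prime $\ell\neq p$ the reduction map is injective on the prime-to-$\ell$ torsion, so the reduced curve $\widetilde E$ over the residue field $k_{\mathfrak p}$ inherits a point of order $p$; thus $p\mid\lvert\widetilde E(k_{\mathfrak p})\rvert$. The whole game is to choose $\ell$ so that Lemma \ref{count} determines $\lvert\widetilde E(k_{\mathfrak p})\rvert$ and forces a congruence on $\ell$ modulo $p$ that we are free to violate.

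First I would carry this out whenever the supersingular congruence of Lemma \ref{count} is compatible with splitting in $K$. Concretely, for $j(E)=0$ I want a rational prime $\ell\equiv 2\Mod 3$, and for $j(E)=1728$ a prime $\ell\equiv 3\Mod 4$, in either case lying below no prime of $S$, different from $p$, \emph{split} in $K$, and with $\ell\not\equiv -1\Mod p$. Given such $\ell$, I choose $\mathfrak p\mid\ell$ with residue field $\mathbb{F}_\ell$, and Lemma \ref{count} gives $\lvert\widetilde E(\mathbb{F}_\ell)\rvert=\ell+1$; hence $p\mid \ell+1$, i.e. $\ell\equiv -1\Mod p$, contradicting the choice. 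Existence of $\ell$ is a Chebotarev (equivalently Dirichlet) statement: the splitting condition in $K$, the congruence $\ell\equiv 2\Mod 3$ (resp. $\ell\equiv 3\Mod 4$), and the avoidance class modulo $p$ can be imposed simultaneously provided the corresponding splitting fields are independent, and since $p>3$ (resp. $p>2$) there is always a residue class $\not\equiv -1\Mod p$ to target.

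The main obstacle is precisely the failure of this independence, which occurs when the supersingular primes are forced to be \emph{inert} in $K$ rather than split -- that is, when $K$ is the CM field of $E$ ($K=\mathbb{Q}(\sqrt{-3})$ for $j=0$, and $K=\mathbb{Q}(i)$ for $j=1728$). Then every good supersingular prime has residue field $\mathbb{F}_{\ell^2}$, the reduced curve need not descend to $\mathbb{F}_\ell$, and Lemma \ref{count} no longer applies verbatim. Here I would bring in the CM structure: since the endomorphisms lie in $\mathcal{O}_K$ and $\ell$ is prime in $\mathcal{O}_K$, the Frobenius of $\widetilde E/\mathbb{F}_{\ell^2}$ is $\pi_{\mathfrak p}=\ell\zeta$ for a root of unity $\zeta\in\mathcal{O}_K^\times$ (a sixth root when $j=0$, a fourth root when $j=1728$), so that $\lvert\widetilde E(\mathbb{F}_{\ell^2})\rvert=N_{K/\mathbb{Q}}(\ell\zeta-1)$, and a $K$-rational point of order $p$ forces $\ell\zeta\equiv 1$ modulo some prime of $\mathcal{O}_K$ above $p$. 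The unit $\zeta=\zeta(\ell)$ is governed by a sextic (resp. quartic) residue symbol attached to the given twist of $E$, and the crux is to show, via Chebotarev applied to the compositum of $K(\zeta_p)$ with the abelian extension cut out by this symbol, that one can choose an inert $\ell$ for which $\ell\zeta(\ell)\not\equiv 1$ modulo every prime above $p$; this gives $p\nmid\lvert\widetilde E(\mathbb{F}_{\ell^2})\rvert$ and the contradiction. Controlling the residue symbol independently of the class of $\ell$ modulo $p$ is the delicate point.

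Finally, the residue-symbol step needs enough room modulo $p$, so the handful of smallest primes (where the mod-$p$ condition can itself interfere with the splitting behaviour) should be settled directly. The cleanest instance is $j(E)=1728$, $p=3$: writing $E:y^2=x^3+ax$, the $3$-division polynomial $3x^4+6ax^2-a^2$ has roots with $x^2=\frac{a}{3}\bigl(-3\pm 2\sqrt3\bigr)$, so a $K$-rational $3$-torsion point can exist only if $\sqrt3\in K$; in particular there is none over $\mathbb{Q}(i)$, disposing of this case by hand. The remaining small pairs $(p,j)$ over the two quadratic cyclotomic fields are a finite explicit check on the sextic, respectively quartic, twist families $y^2=x^3+u$ and $y^2=x^3+ux$.
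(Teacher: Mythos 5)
Your split-prime half is essentially the paper's own argument, but you have missed the device that lets the paper avoid the split/inert dichotomy entirely: it chooses the auxiliary rational prime $\ell$ with $\ell\equiv 1\pmod{q}$ (where $q$ is the torsion prime to be excluded) and $\ell\equiv 2\pmod 3$ (resp.\ $\ell\equiv 3\pmod 4$), and then invokes \emph{both} clauses of Lemma~\ref{count}: whichever of $\F_\ell$ or $\F_{\ell^2}$ the residue field at a good prime $\beta\mid\ell$ is, the point count is $\ell+1\equiv 2$ or $(\ell+1)^2\equiv 4\pmod q$, nonzero in either case. So no splitting hypothesis is imposed, no Chebotarev beyond Dirichlet is used, and the entire CM apparatus you set up for the inert case (Frobenius $\ell\zeta$, sextic/quartic residue symbols, Chebotarev over a compositum) does not appear in the paper. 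In your write-up that apparatus is moreover only a program: you yourself flag ``controlling the residue symbol independently of the class of $\ell$ modulo $p$'' as the unresolved crux, and the concluding ``finite explicit check'' is never performed. As written, the inert case of your argument is a genuine gap, not a proof.

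More seriously, your localization of the difficulty to the two CM fields is incorrect, and the residual case cannot be closed at all in the stated generality. Take $j=1728$, $p=3$, $K=\Q(\sqrt{3})$: by quadratic reciprocity every prime $\ell\equiv 3\pmod 4$ that splits in $K$ satisfies $\ell\equiv -1\pmod 3$, so your split recipe produces nothing there either; and your own root formula $x^2=\frac{a}{3}\bigl(-3\pm 2\sqrt{3}\bigr)$ leads to an outright counterexample to the proposition as stated: with $a=3+2\sqrt{3}$ the curve $E\colon y^2=x^3+(3+2\sqrt{3})x$ over $\Q(\sqrt{3})$ contains $P=(1,\,1+\sqrt{3})$, and the doubling slope is $\lambda=\sqrt{3}$, giving $2P=-P$, so $P$ has exact order $3$ while $j(E)=1728$. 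Hence restricting the final check to the quadratic cyclotomic fields skips a case in which the statement itself fails, and no choice of auxiliary primes can succeed. Your instinct that the inert case is delicate is in fact vindicated against the paper as well: at an inert $\ell$ the reduced curve lives over $\F_{\ell^2}$ and need not descend to $\F_\ell$, so Lemma~\ref{count} (whose hypothesis is $E/\F_p$) does not apply verbatim --- a quartic or sextic twist can have Frobenius trace $2\ell$ over $\F_{\ell^2}$, hence $(\ell-1)^2\equiv 0\pmod q$ points --- and this is exactly the soft spot through which the example above slips past the paper's proof. The instances the paper actually uses ($p\in\{5,7\}$ over $\Q(i)$ and $\Q(\sqrt{-3})$) are consistent with the known classification of torsion of CM curves over quadratic fields, but a complete proof of them requires controlling the inert-prime count, e.g.\ along the lines you sketch.
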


\begin{proof}
Suppose $j(E) = 0$. Twisting by a square in $\mathcal{O}_K$ if necessary, we may
assume that $E$ has a model of the form $y^2 = x^3 + Ax + B$ with $A, B \in \mathcal{O}_K$. Note
that since $\mathcal{O}_K$ is a Dedekind domain, the principal ideal $(\Delta(E))$ generated by the discriminant of $E,$ has only a finite number of prime ideal divisors, and hence $\Delta(E)$ lies in only a finite
number of prime ideals of $\mathcal{O}_K$. 

Let $q > 3$ be a prime in $\mathbb{Z}$. Since $q\neq 3,$ by
the Chinese remainder theorem there exists an integer $n$ satisfying
\begin{align*}
    n+1 & \equiv 2 \Mod q\\
    n  & \equiv 2 \Mod 3.
\end{align*}
Furthermore, $n+3qk$ satisfies the congruences above for every integer k, and $(n, 3q) = 1$
by the congruences above. Hence by Dirichlet's theorem on arithmetic progressions, there
are infinitely many primes in this arithmetic progression. In particular, there is a prime $p$ satisfying the congruences above such that $E$ has good reduction modulo a prime ideal $\beta$ above $p$. As $[K:\Q]=2,$ we have $\mathcal{O}_K/\beta \simeq \mathbb{F}_p$ or $\mathbb{F}_{p^2}.$ %By the comments following \cite[Proposition 3.1]{Sil09}
Moreover, we have an injection of the group $E(K)[\overline{p}]$ into $E(\mathbb{F}_{p})$ or $E(\mathbb{F}_{p^2}).$ But it follows from Lemma \ref{count} that
\begin{align*}
    |E(\mathbb{F}_p)| &= p+1 \equiv 2 \not\equiv 0 \Mod q  \\
    |E(\mathbb{F}_{p^2})|&=(p+1)^2 \equiv 4 \not\equiv 0 \Mod q
\end{align*}
as $q\neq 2$. Hence in either case (noting $p\neq q)$, we conclude there is no point of order $q$ in $E(K)_{\text{tor}}.$

Now suppose $j(E)=1728$. If $q$ is an odd prime, then one can argue just as in the $j(E)=0$ case
that there is no point of order $q.$
\end{proof}

Given an $N$-cycle $C,$ let $K_C$ denote the field of definition of $C$ (that is, the field obtained by adjoining to $K$ all the coordinates of the points of $C$) and for a polynomial $f$, let $K(f)$ denote the splitting field of $f$ over $K$. Magma provides the defining polynomial $f_C$ whose roots determine the $x$-coordinates of the points in $C.$ If $C$ is pointwise rational over a field $L,$ then $f_C$ should split completely over $L$. In particular, if $L$ is a quadratic extension of $K$, then $f_C$ must have irreducible factors of degree at most $2$ over $K$.

\section*{N=15}

The modular curve $X_0(15)$ is an elliptic curve with model
$$ y^2+xy+y=x^3+ x^2-10x-10.$$
Over $K=\Q(i),$ we see the first two entries in \Cref{x0(15)} indicate the only potential isomorphism classes in which we could find a 15-cycle pointwise rational over a quadratic extension of $K$. Hence if a pair $(E,C)$ exists with $E/K$, $C \subseteq E(\overline{K})$ cyclic $\Gal(\overline{K}/K$)-invariant and the points of $C$
$L$-rational for some quadratic extension $L/K$ then in fact $E$ is defined over $\Q$ and $C$ is $\Gal(\overline{\Q}/\Q)$-invariant.  

\begin{table}[ht]

 \caption{($K=\Q(i)$) Representatives $(E,C)$ of isomorphism classes corresponding to non-cuspidal K-rational points  on a model of $X_0(15)$ } 
      \begin{threeparttable}

 \centering 

    \begin{tabular}{ | l | l | l | l |}

    \hline
\label{x0(15)}

    Point & $j(E)$ &  $E$ &  Deg($f_C$) \\ \hline

    (8, -27) & -121945/32 & [-3915, 113670]
   
 &  (1,1,1,2,2) \\ \hline
    
     (-2, -2)& 46969655/32768 & [28485,-838890]
    
 & (1,1,1,2,2) \\ \hline
    
    (-13/4, 9/8)& -25/2 & [-675,-79650]
    
    &  (1,2,4) \\ \hline
    
   (3,-2)  & -349938025/8 & [-162675,-25254450]
   
   &  (1,2,4) \\ \hline
   
    $(1/2,(\mp 15i - 3)/4)$ & $(\mp 198261i-62613)/2$  & 
    $[\frac{\pm 6846i + 9528}{105625}, \frac{\mp 22652i + 30164}{2640625}] $&  (1,2,4)  \\ \hline

     $(\pm 3i-1, \mp6i+6)$  & $(\pm15363i - 47709)/256$  & $[\frac{\mp3i+96}{200}, \frac{\pm 3989i - 373}{10000}]$ &  (1,2,4) \\ \hline 
  
    $(\pm 3i-1, \pm 3i-6)$ &  $(\mp 13670181i+19928133)/8$ & $ [\frac{\pm 2583i + 9444}{8450}, \frac{\mp 93373i+39511}{211250} ] $  & (1,2,4) \\ \hline
    
     $(-7, \pm 15i+3)$ &  $(\mp 86643i-1971)/4$  &  $[\frac{\pm 216i-2688}{625}, \frac{\pm 8608i-53344}{15625} ]$ & (1,2,4) \\ \hline

\end{tabular}
  \begin{tablenotes}
            \item[$\dagger$]  In the last column we list the degrees of the irreducible factors of $f_C$ over K.
            
        \end{tablenotes}
     \end{threeparttable}
\end{table}

The point $(8,-27)$ corresponds to $(E,C)$ with 
$$
f_C =  (x - 7/10)(x+ 1/2)(x+ 17/10)(x^2+x- 139/20)(x^2+13x+ 269/20)
$$
A brief computation yields $K(f_C)=\Q(\sqrt{5})$. Since $j \not = 0, 1728$, any pair $(E',C')$ equivalent to $(E,C)$ is of the form $E'=E^d$, $C'=C^d$ for $d$ in $K$. As $K(f_C) = K(f_{C^d})$, if $K_C/K$ is degree 2 
then we must have $K_C = K(\sqrt{5})$. The point $(1/2, 3\sqrt{-6}/5  )$ is in $C$, so the only potential $d$-twists (up to a square in $K$) in which $K_{C^d}/K$ is degree 2 (namely $K(\sqrt{5})$) are $d=-6,-6 \cdot 5$.  Magma now tells us that for these two values of $d$, $E^d(K(\sqrt{5}))_{tor} \simeq C_{15}$. 

The point $(-2,-2)$ corresponds to $(E,C)$ with $f_C = h(x)q(x)$ where: 
$$h(x) =  (x - 3/104)(x+17/520)(x+113/520)$$
\vspace{-5mm}
$$q(x) = (x^2 - (11/52)x + 2333/54080)(x^2 + (1/52)x + 437/54080)$$
We compute by Magma that $K(f_C)=\mathbb{Q}(\sqrt{-15})$, so as above, if $K_C/K$ is degree 2 then we must have $K_C =
K(\sqrt{-15})$. The point $(3/104, 4\sqrt{26}/845  )$ is in $C$, so the only potential $d$-twists in which $K_{C^d}/K$ is degree 2,
are $d=26,26 \cdot (-15)$.  Magma now tells us that for these two values of $d$, $E^d(K(\sqrt{-15}))_{tor} \simeq C_{15}$.  Hence there are exactly four elliptic curves over $K$ (up to isomorphism over $K$) with a 15-cycle pointwise rational over a quadratic extension of $K$.

Similarly, over $K = \Q(\sqrt{-3})$ we find the same four elliptic curves which are the only elliptic curves over $K$ with a 15-cycle pointwise rational over a quadratic extension of $K$.

\section*{N=20}

The modular curve $X_0(20)$ is an elliptic curve with model $$ y^2=x^3+x^2+4x+4. $$

Over $K=\Q(\sqrt{-3}),$ $X_0(20)(K)$ has rank 0, torsion $C_6$ and these points are all cusps. Over $K=\Q(i),$ $X_0(20)(K)$ has rank $0$ and torsion $C_2 \oplus C_6$ with $6$ rational cusps. Using Magma, we compute the $4$ non-cuspidal $K$-rational points correspond to the isomorphism classes $[(E,C)]_{K}$ where $C$ is pointwise rational over an extension of $K$ of degree at least 4. The $2$ non-cuspidal $K$-rational points correspond to the isomorphism classes $[(E,C)]_{K}$ where $j(E)$ is equal to $1728$.

In case $j = 1728,$ we will argue by way of contradiction. If there exists an elliptic curve $E/K$ with $j(E)=1728$ and $C \subseteq E(L)$ a cyclic $\Gal(\overline{K}/K)$-invariant subgroup of order $20$ where $L=K(\sqrt{d})$ for $d \in K$ a non-square, then its quadratic twist $E^d$  has a point of order $5$ over $K$ by Lemma \ref{lem1}. But this contradicts Proposition \ref{cm_case}.

{
\renewcommand{\arraystretch}{1.3}

 \begin{table}[ht]
 \caption{($K=\mathbb{Q}(i)$) Representatives (E,C) of isomorphism classes corresponding to non-cuspidal K-rational points  on a model of $X_0(20)$   } 
  
  \begin{threeparttable}

\setlength{\tabcolsep}{14pt}

    \begin{tabular}{| l | l | l | l |}
    
    \hline
\label{x0(20)}    
    Point & j(E) &  E & $f_C$ \\ \hline
    
    $(\mp 2i, 0)$& $287496$ & $[\frac{\pm 264i + 77}{625}, \frac{\pm 616i + 1638}{15625}]$ & $(1,1,2,2,4)$ \\ \hline
    
   $(\pm 2i-2, \mp 2i-4)$  & $287496$ & $[\frac{\pm 264i + 77}{625}, \frac{\pm 616i + 1638}{15625}]$ & $(1,1,2,2,4)$ \\ \hline

   $(\pm 2i-2, \pm2i+4)$  & 1728 & See Proposition \ref{cm_case}   &   \\ \hline
   
    \end{tabular} 
     \begin{tablenotes}
            \item[$\dagger$]  In the last column we list the degrees of the irreducible factors of $f_C$ over K.
        \end{tablenotes}
     \end{threeparttable}

\end{table}
}

%\subsection{N=21}
\section*{N=21}

The modular curve $X_0(21)$ is an elliptic curve with model
$$  
y^2 + xy = x^3 - 4x - 1 
$$.
Over $K=\mathbb{Q}(\sqrt{-3}),$ $X_0(21)$ has rank $0$ and torsion $C_2 \oplus C_8$ over $K$ with $4$ cusps.  The $12$ non-cuspidal points correspond to isomorphism classes $[(E,C)]_K$ and using Magma we found representatives of each class (see \Cref{x0(21)}).  As the table indicates, for each representative $(E,C)$ with $j \not = 0$, $f_C$ has an irreducible factor of degree at least $3$ and hence 
$[K_C:K] \geq [K(f_C):K] \geq 3$.
In particular, there is no quadratic extension $L/K$ such that all the points of $C$ are $L$-rational. Since $j \not = 0,1728$ in each of the aforementioned cases, %by \cite[p. 45]{Sil09} 
the isomorphism class $[(E,C)]_K$ of $(E,C)$ only consists of $(E^d,C^d)$ for some non-zero $d \in K$, where $C^d$ denotes the image of $C$ under quadratic twist by $d$. If $(x,y) \in C$ then $(dx, d^{3/2}y) \in C^d$.  As $d \in K$, $K(f_C) = K(f_{C^d})$.  Hence the $8$ isomorphism classes with $j \not = 0$ in \Cref{x0(21)} do not contain an example of an elliptic curve $E/K$ with a $21$-cycle pointwise rational over a quadratic extension of $K$. In the $j=0,1728$ case, Magma is not yet able to describe the isomorphism class, so we instead argue as follows: If there is an elliptic curve $E/K$ such that $E(L)$ has a subgroup of order $21$ for a quadratic extension $L/K$, then by Lemma \ref{lem1} there is a quadratic twist of the original curve $E$ with a $K$-rational point of order 7. But this is not possible by Proposition \ref{cm_case}.

{
\renewcommand{\arraystretch}{1.3}

\begin{table}[ht]
 \caption{($K=\mathbb{Q}(\sqrt{-3})$) Representatives (E,C) of isomorphism classes corresponding to non-cuspidal K-rational points on a model of $X_0(21)$ } 
 
\begin{threeparttable}
 
 \centering 

    \begin{tabular}{ | l | l | l | l |}
    
    \hline
\label{x0(21)}    
    Point & j(E) &  E &  $f_C$ \\ \hline
    
    $(-1/4, 1/8)$ & $3375/2$ & $[20/441, -16/27783 ]$ & $(1,3,3,3)$ \\ \hline
    
   $(2, -1)$  & $-189613868625/128$ & $[-1915/36, -48383/324]$  & $(1,3,6)$ \\ \hline
   
    $(-1, 2)$ &  $-1159088625/2097152$   &   $[-505/192, -23053/6912 ]$ &  $(1,3,6)$  \\ \hline
    
     $(5,13)$  &  $-140625/8$  & $[-1600/147, -134144/9261]$  & $(1,3,3,3)$ \\ \hline

    $(\frac{\pm \sqrt{-3}+1}{2}, \pm \sqrt{-3}-1)$ &  $-12288000$  &  $[\frac{\pm 40\sqrt{-3}+10}{49}, \frac{\mp 2530\sqrt{-3}-6831}{12348}]$ & $(1,3,6)$ \\ \hline
    
    $(\frac{\pm \sqrt{-3}+1}{2}, \frac{\mp 3\sqrt{-3}+1}{2})$  &  $54000$   & $[\frac{\mp 135\sqrt{-3}-585}{98}, \frac{\mp 660\sqrt{-3}-1782}{343} ]$ & $(1,3,6)$  \\ \hline
    
    $(\frac{\mp 3\sqrt{-3}-5}{2}, 8)$  &  0 & See Proposition \ref{cm_case}   & \\ \hline
    
    $(\frac{\mp 3\sqrt{-3}-5}{2}, \frac{\pm 3\sqrt{-3}-11}{2})$  &  0  & See Proposition \ref{cm_case}   & \\ \hline

    \end{tabular}
    \begin{tablenotes}
            \item[$\dagger$] In the last column we list the degrees of the irreducible factors of $f_C$ over $K$.
            %\item[$\ddagger$] We use the model $y^2 + xy = x^3 - 4x - 1$ for $X_0(21)$.
        \end{tablenotes}
     \end{threeparttable}

\end{table}
}

    Over $K= \Q(i)$, $X_0(21)$ has rank 1. Thus, a search of points in $X_0(21)(K)$ is computationally infeasible to determine whether there is an elliptic curve over $K$ with a $21$-cycle pointwise rational over a quadratic extension of $K$. In order to rule out this case, we will follow a similar approach as in \cite[Proposition 7.4]{Ej18} but using a different tool such as symmetric square of an algebraic curve. 

\begin{prop}\label{21-cycle}
Let $K=\Q(i)$ and let $E/K$ be any elliptic curve. Then $E(L)$ has no subgroup of order $21$ for any quadratic extension $L$ of $K.$ 
\end{prop}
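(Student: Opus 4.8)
The plan is to convert the existence of an order-$21$ subgroup in $E(L)$ into a $\Q(i)$-rational point on a modular curve that is finer than $X_0(21)$, and then to control the rational points of its symmetric square over $\Q$. The reason this detour is needed is precisely that $X_0(21)$ has positive rank over $\Q(i)$, so one cannot enumerate $X_0(21)(\Q(i))$ directly.

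First I would analyze the Galois-module structure. Suppose $C_{21}=\langle P\rangle\subseteq E(L)$ with $L=K(\sqrt d)$, and write $P=P_3+P_7$ with $P_3,P_7$ of orders $3,7$. Since $[\Q(\mu_7):\Q]=6$ does not divide $[L:\Q]=4$, the Weil pairing forbids $E(L)[7]\simeq C_7\oplus C_7$, so $E(L)[7]\simeq C_7$; by Lemma \ref{lem1} the $7$-torsion point is rational over $K$ on exactly one of $E,E^d$. For the $3$-part, Lemma \ref{lem1} gives $E(L)[3]\simeq E(K)[3]\oplus E^d(K)[3]$, and if this is $C_3\oplus C_3$ then both $E$ and $E^d$ acquire a $K$-rational $3$-torsion point; whichever of the two also carries the $7$-torsion point would then have a $K$-rational point of order $21$, contradicting \eqref{eq:kamiennygroups}. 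Hence $E(L)[3]\simeq C_3$, and the $3$-torsion point is rational over $K$ on exactly one of $E,E^d$. If the $7$- and $3$-parts become rational on the same curve we again contradict \eqref{eq:kamiennygroups}; since $E^d\simeq E$ over $L$ allows me to interchange $E$ and $E^d$, I may assume $E(K)\supseteq C_7$ while $E^d(K)\supseteq C_3$.

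Next I would pass to a modular curve. The inclusion $E(K)\supseteq C_7$ gives a $K$-rational point of order $7$, and $E^d(K)\supseteq C_3$ gives a $K$-rational cyclic subgroup of order $3$ on $E^d$, hence on $E$. Thus $(E,P_7,\langle P_3\rangle)$ is a $K=\Q(i)$-rational point of the modular curve $X=X_\Gamma$ attached to $\Gamma=\Gamma_1(7)\cap\Gamma_0(3)$, a degree-$3$ cover of $X_0(21)$ defined over $\Q$; a genus computation (index $96$ in $\mathrm{PSL}_2(\Z)$, with $\nu_2=\nu_3=0$) gives $g(X)\ge 2$. Crucially, $E$ has a $7$-torsion point over the quadratic field $\Q(i)$, so Proposition \ref{cm_case} forces $j(E)\neq 0,1728$; the point we obtain is therefore non-cuspidal and non-CM. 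Now a $\Q(i)$-rational point $x$ of $X$, together with its conjugate under $\Gal(\Q(i)/\Q)$, yields a $\Q$-rational point $\{x,\bar x\}$ of the symmetric square $X^{(2)}=\Sym^2 X$; after determining the Mordell–Weil group of $\operatorname{Jac}(X)$ over $\Q$, I would describe $X^{(2)}(\Q)$ via the Abel–Jacobi map $X^{(2)}\to\operatorname{Jac}(X)$ (Theorem \ref{isogeny}) and check that every such point corresponds either to a cusp or to a curve with $j\in\{0,1728\}$, contradicting the properties of the point produced above.

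The main obstacle I expect is this final step. Because $X_0(21)$ already has positive rank over $\Q(i)$, one must ascend to the finer curve $X$ and control $\operatorname{Jac}(X)(\Q)$: the symmetric-square/Chabauty analysis succeeds only if the rank is small enough relative to $g(X)$, ideally if the isogeny factors of $\operatorname{Jac}(X)$ beyond $\operatorname{Jac}(X_0(21))$ (coming from the $\Gamma_1(7)$-refinement) have rank $0$ over $\Q$. Pinning down the finitely many points of $X^{(2)}(\Q)$ and matching each of them to a cusp or to a CM curve excluded by Proposition \ref{cm_case} is the delicate part of the argument.
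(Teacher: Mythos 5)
Your proposal follows essentially the same route as the paper: after the same twisting reduction (7-torsion $K$-rational on one twist, the 3-cycle Galois-stable), the paper realizes your curve $X_{\Gamma_1(7)\cap\Gamma_0(3)}$ concretely via Kubert's parametrization and the third division polynomial as a genus-3 hyperelliptic curve $\tilde C$, and then carries out exactly your symmetric-square argument, confirming the rank-0 hypothesis you flagged as the main obstacle by showing $\operatorname{Jac}(\tilde C)(\Q)\simeq \Z/2\Z\oplus \Z/52\Z$ via 2-descent and reduction modulo 5. The only divergence is the endgame: the surviving points of $\tilde C(\Q(i))$ turn out to be genuine non-cuspidal rational points rather than cusps or CM points, and the paper eliminates them by observing that they would yield an elliptic curve over $\Q$ with a subgroup of order 21 over a quadratic field, which is impossible by the classification of torsion growth for curves over $\Q$ (the paper's reference [LL85]).
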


%\begin{cor}
%There is no elliptic curve $E$ defined over $\Q(i)$ with a $21$-cycle pointwise rational over a quadratic extension of $\Q(i).$
%\end{cor}

\begin{proof}
Suppose that $E(L)$ has a subgroup of order 21 for a quadratic extension $L/K.$ We may assume that $E(K)$ has a point of order $7$ (by replacing $E$ with a quadratic twist if necessary) by Lemma \ref{lem1}. By the parametrization for the family of elliptic curves with a point of order $7$ in \cite[Table 3, p.217]{Kub76}, $E$ is isomorphic to $E_{t}$ for some $t \neq 0,1$ in $K$ such that
\begin{align}\label{three}
	 E_t : y^2 + (1-c)xy - by = x^3 - bx^2 
\end{align}
with $b = t^3 - t^2$ and $c= t^2 -t.$\footnote{The $j$-invariant of $E$ cannot be $0$ or $1728$ by Proposition \ref{cm_case}. Thus $E$ is a quadratic twist of $E_t$ and if $E(L)$ has a point of order $3$ so has $E_t(L')$ for some quadratic extension $L'/K.$} Now let $\langle P \rangle \subseteq E(L)$ be a cyclic subgroup of order $3$ with $P=(x(P),y(P)).$ Note that $\langle P \rangle$ is Gal$(\overline{K}/K)$-invariant and $x(P) = x(-P).$ Then we have $x(P)^{\sigma} = x(P^{\sigma}) = x(P)$ for any $\sigma \in \Gal(\overline{K}/K)$ and so $x(P)$ is defined over $K$. Hence the pair $(E,P)$ corresponds to the point $(t, x(P))$ on the curve $C$ given by the equation $ \phi(t,x) = 0 $ where $\phi(t,x)$ is the third division polynomial of $E_t$ :

$$ \phi(t,x) = x^4 + (\frac{1}{3}t^4 - 2t^3 + t^2 + \frac{2}{3}t + \frac{1}{3})x^3 + (t^5 - 2t^4 + t^2)x^2 + (t^6 - 2t^5 + t^4)x + (-\frac{1}{3}t^9 + t^8 - t^7 + \frac{1}{3}t^6). $$

It boils down to finding the set $C(K)$ of all $K$-rational points on $C$. By Magma, $C$ is isomorphic (over $\Q) $ to the hyperelliptic curve $\tilde C$ defined by $y^2=f(u)$ where 
$$ f(u)=u^8-6u^6- 4u^5 + 11u^4 + 24u^3 + 22u^2 + 8u +1 $$
outside the singular points: $(0,0)$ and $(0,1)$. By the choice of $t \neq 0,1$, it suffices to determine $\tilde C(K).$ The curve $\tilde C$ has genus 3 and its defining polynomial factors as
$$f(u) = (u^2 + u +1)(u^6 - u^5 -6u^4 + 3u^3 + 14u^2 + 7u +1).$$
Let $\tilde C^{(2)}$ be the symmetric square of $\tilde C.$ The hyperelliptic map $\iota : C \rightarrow \PP^1$ gives a divisor $D_{\infty}$ of degree 2 which consists of the points that map to $\infty \in \PP^1.$ The map $\phi:  \tilde C^{(2)} \rightarrow J $ sending 
$D$ to  $[D-D_{\infty}]$ is injective outside of $0 \in J(\Q)$ and $\phi^{-1}(0)$ is a line. That is 
$ \phi^{-1}(0) = \{ \iota^{-1}(\alpha) : \alpha \in \PP^1(\Q)\}$ and we have
$$ \tilde C^{(2)}(\Q) = \phi^{-1}(0) \cup \phi^{-1}(J(\Q)\backslash \{0\}). $$
By $2$-descent algorithm in Magma, $J(\Q)$ has Mordel-Weil rank 0. As $J$ has a good reduction at 5, the map $J(\Q) \rightarrow J(\mathbb{F}_{5})$ is injective. Since the group generated by a point of order $2$ coming from the factor of degree 2 of $f(u)$ and the difference $\infty_{+} -\infty_{-}$ of the two points at infinity, surjects onto $J(\mathbb{F}_5) \simeq \mathbb{Z}/2\mathbb{Z} \times \mathbb{Z}/52\mathbb{Z}$, it is therefore equal to $J(\Q).$ 

To find $C(K)$, consider a point $P' \in C(K)$ and write $\overline{P'}$ for its image under the nontrivial automorphism of $K.$ Then $D=P+\overline{P'}$ is a divisor of degree 2 which is defined over $\Q$, and so it is a point of $\tilde C^{(2)}(\Q).$ If $\phi(D)=0$ then $D$ is of the form $\iota^{-1}(\alpha)$ for some $\alpha \in \PP^1(\Q).$ This means that $D=(\alpha, \sqrt{f(\alpha)}) + (\alpha, -\sqrt{f(\alpha)})$ and thus $P'=(\alpha, \pm \sqrt{f(\alpha)})$ with $\alpha \in \Q.$ Since $P'$ is $K$-rational, $f(\alpha)=-\beta^2$ for some rational $\beta.$ But $f$ happens to have no real roots, and is thus positive over the real numbers, so cannot be equal to $-\beta^2$ for any rational $\beta.$ Otherwise, $D$ is mapped to a non-zero point in $J(\Q).$ As $J(\Q)$ is finite and isomorphic to $\Z/2\Z \times \Z/52\Z$ by the preceding paragraph, one can list all these 103 elements of $J(\Q)$ in Mumford representations by Magma. We check if these divisors are of degree 2 with the points in the support defined over $K.$ We find that the only such divisors are sums of two rational points, except 
\begin{align*}
	  (\frac{-1 \pm \sqrt{-3}}{2}, 0), & \  (\pm \sqrt{2}, \pm 4\sqrt{2}+5),\ (\pm \sqrt{2}, \mp 4\sqrt{2} - 5),\ (\frac{-2 \pm \sqrt{2}}{2}, \frac{-5 \pm 4\sqrt{2}}{4}), \\
 (1 \pm \sqrt{2}, & 11 \pm 8\sqrt{2}),\ (\frac{-2 \pm \sqrt{2}}{2}, \frac{5 \mp 4\sqrt{2}}{4}), (1\pm \sqrt{2}, -11 \mp 8\sqrt{2}).
\end{align*}
None of these quadratic points is defined over $K.$ This shows that there are no ``exceptional" points over $K$, i.e., no $K$-rational points with $x$-coordinate not in $\Q.$ 
%If $x(P') \in \Q$ and $P \notin C(\Q)$ then $P'+\overline{P'}$ is coming from the pull-back of a point in $\PP^1(\Q)$ under the hyperelliptic double cover $\iota :C \rightarrow \PP^1.$ This implies that 
%$$
%P'+\overline{P'}=(\alpha, \sqrt{f(\alpha)}) + (\alpha, -\sqrt{f(\alpha)})
%$$ 
%and thus $P'=(\alpha, \pm \sqrt{f(\alpha)})$ with $\alpha \in \Q.$ Since $P'$ is $K$-rational, $f(\alpha)=-\beta^2$ for some rational $\beta.$ But $f$ happens to have no real roots, and is thus positive over the real numbers, so cannot be equal to $-\beta^2$ for any rational $\beta.$ 
Therefore, 
	$$ \tilde C(K) = \tilde C(\Q)=\{\infty_{+}, \infty_{-},(0,1),(0,-1),(1,1),(1,-1)\}.$$ 
	
This implies that the pair $(E,P)$ associated to $(t,x(P))$ in $C(K)$ corresponds to a point in $\tilde{C}(K)$, so to a rational point on the curve $\tilde{C}$ and therefore to a rational point on $C.$ However, if $E$ is over $\Q$ and $x(P) \in \Q,$ then $P \in E(\Q(\sqrt{d}))$ for some $d \in \Q.$ Then $E(\Q(\sqrt{d}))$ has a subgroup of order 21, but this is impossible by  \cite{LL85}.
\end{proof}

\section*{N=24}

The modular curve $X_0(24)$ is an elliptic curve defined by 
$$y^2=x^3-x^2-4x+4. $$
Over $\mathbb{Q}$, $X_0(24)$ has rank 0, torsion $C_2 \oplus C_4$ and 8 cusps.  The torsion and rank do not grow upon extension to $K = \Q(i)$ or $\Q(\sqrt{-3}).$

\section*{N=27}

The curve $X_0(27)$ is an elliptic curve with model $$y^2 + y = x^3 - 7$$.
Over $\Q$, $X_0(27)$ has rank 0, torsion $C_3$ and 2 cusps.  The torsion and rank do not grow upon extension to $K =  \Q(i).$  The one non-cuspidal point (3,-5) induces a pair $(E,C)$ with $j(E) = -12288000$ and the degrees of the irreducible factors of $f_{C}$ over $\Q(i)$ are (1,3,9).  Because $j(E) \not  = 0,1728$, the isomorphism class of $(E,C)$ just consists of quadratic twists of this pair, and hence will yield the same degrees of irreducible factors. Over $K = \Q(\sqrt{-3})$, $X_0(27)$ has rank 0 and torsion $C_3 \oplus C_3$ with 6 cusps.  As in the case $K=\Q(i)$, the 3 non-cuspidal points do not yield 27-cycles pointwise rational over a quadratic extension of $K$.

\section*{N=30}

Let $K=\Q(i)$ or $\Q(\sqrt{-3})$.  If $E/K$ possesses a cyclic Gal($\overline{K}/K$)-invariant subgroup $C$ of order 30, then it has a unique cyclic subgroup of order 15 that is also $\Gal(\overline{K}/K$)-invariant.  So if $K_C/K$ is degree 1 or 2 
%So if there is a quadratic extension $L/K$ such that all the points of $C$ are $L$-rational,
%if $C$ is defined over a quadratic extension of $K$ 
then $E$ possesses a 15-cycle pointwise rational over a quadratic extension of $K$.  
But there are only four such pairs $(E,C')$, and we found that in each case $K_{C'}/K$ was degree 2 so we would have $K_C = K_{C'}$.
But as studied in case $N=15,$ the each torsion over the extension was $C_{15},$ so there are no 30-cycles pointwise rational over a quadratic extension of $K$.

\section*{N=35}

Magma tells us $X_0(35)$ is of genus 3 with affine model $$ y^2 + (-x^4 - x^2 - 1)y = -x^7 - 2x^6 - x^5 - 3x^4 + x^3 - 2x^2 + x $$

Furthermore, Magma found an automorphism of $X_0(35)$ such that the quotient curve $E$ is genus 1 with affine model:
$$y^2 + y = x^3 + x^2 + 9x + 1$$
The quotient map (defined between the projective closures) is given by:

$$\Phi:X_0(35) \rightarrow E $$
$$(x,y,z) \mapsto (p^{\Phi}_1,p^{\Phi}_2,p^{\Phi}_3)$$
where
\begin{align*}
p^{\Phi}_1 &= x^4 - 5x^3z - 8x^2z^2 + 5xz^3 + z^4 \\
p^{\Phi}_2 &= 3x^4 - x^3z + 4x^2z^2 + xz^3 - 7y + 3z^4 \\
p^{\Phi}_3 &= x^4 + 2x^3z - x^2z^2 - 2xz^3 + z^4
\end{align*}
Since $\Phi$ is a rational map, the only potential $K$-rational points of $X_0(35)$ are the non-regular points of $\Phi$ and $\Phi^{-1}(E(K))$. In order to compute $\Phi^{-1}(E(K))$ we must first compute $E(K)$.  Magma/Sage give us the following information:

\begin{table}[ht]
\caption{ $K$-Rational Points on a Genus 1 Quotient of $X_0(35)$}
\begin{center}
    \begin{tabular}{| l | l | l | p{6.5cm} |}
    \hline
    $K$ &  rk($E(K)$) & $E(K)_{tor}$ & Points of $E(K)_{tor}$\\ \hline
    $\mathbb{Q}(i)$ & 0 & $ C_3$ & [0,1,0], [1,3,1],[1,-4,1] \\ \hline
     $\mathbb{Q}(\sqrt{-3})$ & 0 & $C_3 \oplus C_3$ & 
     $[0,1,0], [1,3,1],[1,-4,1], \newline
     [\frac{1}{2}(\pm 5\sqrt{-3}-1), \frac{1}{2}(\mp 5\sqrt{-3}+9),1], \newline
     [\frac{1}{2}(\pm 5\sqrt{-3} - 1), \frac{1}{2}(\pm 5\sqrt{-3}-11),1],\newline
     [-\frac{4}{3}, \frac{1}{18}(\pm 35\sqrt{-3} -9),1 ]$
    \\ \hline
    \end{tabular}
\end{center}
\end{table}

To compute $\Phi^{-1}([x,y,z])$, we form the ideal $<C, p^{\Phi}_1-wx, p^{\Phi}_2-wy,p^{\Phi}_3-wz>$ 
(C denotes the model of $X_0(35)$ above) and compute its Gr{\"o}bner Basis (with respect to the ordering x,y,z,w).  Often, one can find basis elements that allow the system to be solved by hand.  We can assume $z\not = 0$ as the only point on our model of $X_0(35)$ with this property is [0,1,0] and $\Phi$ is not defined at this point.

\begin{table}[ht]
\caption{ Gr{\"o}bner basis data for determination of $\Phi^{-1}(E(K))$ }
\begin{center}
    \begin{tabular}{| l | l | l |  p{10cm} |}
    \hline
    Point P of E(K) & $\Phi^{-1}(P)$ & Gr{\"o}bner basis elements \\ \hline
    [0,1,0] & $\emptyset$  & $w^2$  \\ \hline
    
    [1,3,1] & [0,0,1]  & $xw^2, yw$ \\ \hline
    [1,-4,1] &    [0,1,1]     &  $xzw, yw-w^2,zw^3-w^4$ \\ \hline

    \end{tabular}
\end{center}

\end{table}

For each of the extra six points over $K = \mathbb{Q}(\sqrt{-3})$ the Gr{\"o}bner basis contains a polynomial $g(w)$. Using Magma one can check that in each case the only root of $g(w)$ over $K$ is 0.  Hence the ($K$-rational) inverse image of these points under $\Phi$ is empty.

Finally, using a Gr{\"o}bner basis for the ideal $<C, p^{\Phi}_1, p^{\Phi}_2,p^{\Phi}_3>$ we can determine the non-regular points of $\Phi$.  If $z \not = 0$ then the Gr{\"o}bner basis contains $y^2-6y+4$.  This has no roots over $K$.  Hence the only $K$-rational points on $X_0(35)$ are $[0,0,1]$, $[0,1,0]$ and $[0,1,1]$. However, these points are all cusps.

\section*{N=45}

The curve $X_0(45)$ is  a non-hyperelliptic curve of genus $3$ with a projective model
$$ x^2y^2 +x^3z - y^3z - xyz^2 + 5z^4 $$
The set of all quadratic points on $X_0(45)$ are determined in \cite{OS18} by using a slightly different model. By \cite[Table 8.5]{OS18}, there are no non-cuspidal $K$-rational points on $X_0(45)$ for $K=\Q(i)$ or $\Q(\sqrt{-3}).$ 

We summarize our findings in the following theorem.

\begin{thm}\label{isogeny}
Let $K=\Q(\sqrt{D})$ with $D=-1,-3$, and let $E$ be any elliptic curve defined over $K$. Then $E$ has no $N$-cycles pointwise rational over a quadratic extension of $K$ for $N=20, 21, 24,30, 35, 45.$ In either case, there are exactly four elliptic curves over $K$ (up to isomorphism over $K$) with a $15$-cycle pointwise rational over a quadratic extension of $K$.
\end{thm}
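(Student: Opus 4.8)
The plan is to assemble the case-by-case analyses carried out for each modular curve $X_0(N)$ in the preceding sections. A $K$-rational point on $X_0(N)$ corresponds to an isomorphism class of pairs $(E,C)$ with $C$ a $\Gal(\overline{K}/K)$-invariant cyclic subgroup of order $N$, and such a $C$ is pointwise rational over a quadratic extension $L/K$ precisely when its field of definition satisfies $[K_C:K]\leq 2$, equivalently when the defining polynomial $f_C$ splits into factors of degree at most $2$ over $K$. The proof thus reduces to verifying, for each $N\in\{20,21,24,30,35,45\}$, that no such pair exists, and for $N=15$ that exactly four do.

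First I would dispose of the values of $N$ for which $X_0(N)$ has rank $0$ over $K$ by a finite computation: for $N=20,24,35,45$ (and the auxiliary $N=27$) one enumerates the $K$-rational points, separates cusps from non-cuspidal points, and inspects the factorization type of $f_C$ on each isomorphism class. Since $j\neq 0,1728$ in the generic cases, the whole class is a twist-orbit $\{(E^d,C^d):d\in K\}$ with $K(f_C)=K(f_{C^d})$, so it suffices to test one representative per class; wherever $f_C$ has an irreducible factor of degree $\geq 3$, the $N$-cycle cannot become pointwise rational over any quadratic extension. The sporadic $j=0,1728$ points are handled separately: by Lemma \ref{lem1} a quadratic twist would acquire a $K$-rational point of prime order $>3$ (resp. $>2$), contradicting Proposition \ref{cm_case}. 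The case $N=35$ additionally requires passing to the genus-$1$ quotient $E$ and pulling back $\Phi^{-1}(E(K))$, while $N=45$ invokes the classification of quadratic points from \cite{OS18}.

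The hard part will be $N=21$ over $K=\Q(i)$, where $X_0(21)$ has rank $1$ and a naive search of $X_0(21)(K)$ is infeasible. Here I would instead reduce, after a twist via Lemma \ref{lem1} and with $j=0,1728$ excluded by Proposition \ref{cm_case}, to the Kubert family $E_t$ carrying a $K$-rational point of order $7$, translate the existence of a rational $3$-cycle into a $K$-rational point on the curve $C:\phi(t,x)=0$, and pass to the associated genus-$3$ hyperelliptic curve $\tilde C:y^2=f(u)$. The key mechanism is to study $\tilde C^{(2)}(\Q)$ through the map to the Jacobian $J$: a $K$-rational point on $\tilde C$ with non-rational $x$-coordinate would, together with its conjugate, produce a $\Q$-rational effective degree-$2$ divisor, hence a point of $\tilde C^{(2)}(\Q)$. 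Computing $J(\Q)$, which has rank $0$ and is isomorphic to $\Z/2\Z\times\Z/52\Z$, and listing its elements in Mumford form shows every such divisor splits into rational points, so $\tilde C(K)=\tilde C(\Q)$ and no $21$-cycle survives. This is exactly Proposition \ref{21-cycle}.

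Finally, for $N=30$ I would use that any $\Gal(\overline{K}/K)$-invariant $30$-cycle contains a unique invariant $15$-cycle, forcing its field of definition to coincide with that of one of the four $15$-cycles already located; since each of those yields torsion $C_{15}$ and nothing containing $C_{30}$, no $30$-cycle is pointwise rational over a quadratic extension. The count of exactly four curves for $N=15$ comes directly from the analysis of the two relevant points $(8,-27)$ and $(-2,-2)$ on $X_0(15)$ over each field, each producing two admissible twists realizing $C_{15}$ over the appropriate quadratic extension. Collecting these conclusions establishes the theorem.
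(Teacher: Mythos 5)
Your proposal is correct and follows essentially the same route as the paper: rank-zero point enumeration with the twist-orbit reduction on $f_C$ for the generic classes, Proposition \ref{cm_case} with Lemma \ref{lem1} for the $j=0,1728$ points, the Gr\"obner-basis pullback through the genus-$1$ quotient for $N=35$, the quadratic-point classification of \cite{OS18} for $N=45$, the symmetric-square/Jacobian argument of Proposition \ref{21-cycle} for $N=21$ over $\Q(i)$ (with the rank-$0$ table computation covering $\Q(\sqrt{-3})$), the reduction of $30$-cycles to the four known $15$-cycles, and the explicit twist count at the points $(8,-27)$ and $(-2,-2)$ on $X_0(15)$. No substantive differences from the paper's proof.
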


\section{Growth of Torsion}

\subsection{Growth of Cyclic Even-order Torsion}

\begin{lem}\label{max_p}
Let $K=\Q(\sqrt{D})$ with $D=-1,-3$ and $E/K$ an elliptic curve with $E(K)[2]\simeq C_2.$ If $p$ is an odd prime dividing $|E(L)_{\text{tor}}|$ where $L=K(\sqrt{d})$ for $d\in K$ a non-square, then $p \leq 5$ and $ E(L)[p^{\infty}] \simeq C_3, C_5  \ \text{or}\ C_3 \oplus C_3.$
\end{lem}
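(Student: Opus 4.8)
The plan is to reduce the odd-order torsion of $E(L)$ to that of $E$ and its twist $E^d$ over $K$ by means of Lemma \ref{lem1}, and then to combine the guaranteed rational $2$-torsion point with Najman's classification (Theorem \ref{mainlist}) to bound the order of odd-order points very sharply.

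First I would record two consequences of the hypothesis $E(K)[2]\simeq C_2$. By Theorem \ref{burton}(ii) we also have $E^d(K)[2]\simeq C_2$, so \emph{both} $E$ and $E^d$ possess a $K$-rational point of order $2$. Next, applying Lemma \ref{lem1} with $n=p^{k}$ for every $k$, for each odd prime $p$ we obtain
$$ E(L)[p^{\infty}]\simeq E(K)[p^{\infty}]\oplus E^{d}(K)[p^{\infty}], $$
so it suffices to understand the odd $p$-parts of $E(K)$ and of $E^d(K)$.

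The crucial observation is an \emph{order-doubling} argument: if either $E(K)$ or $E^d(K)$ has a $K$-rational point of odd order $m$, then adding the rational order-$2$ point produces a $K$-rational point of order $2m$. By Theorem \ref{mainlist} the largest order of an element in any torsion group occurring over a quadratic cyclotomic field is $12$, so $2m\le 12$, i.e.\ $m\le 6$. Hence neither $E(K)$ nor $E^d(K)$ has a point of odd order exceeding $5$; in particular there is no point of order $7$, $9$, $11,\dots$. Feeding this back through Lemma \ref{lem1}, $E(L)$ likewise has no point of order $7$ or $9$, every odd prime $p$ dividing $|E(L)_{\text{tor}}|$ satisfies $p\le 5$, and $E(L)[p^{\infty}]$ has exponent exactly $p$.

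It then remains to pin down the isomorphism type of the $3$- and $5$-parts. Since $E(L)[p^{\infty}]$ is $p$-elementary, it embeds into $E[p]\simeq C_p\oplus C_p$, leaving only the options $C_p$ and $C_p\oplus C_p$. For $p=3$ both already appear in the allowed list. For $p=5$ I must exclude $C_5\oplus C_5$: by the Weil pairing this would force $\mu_5\subseteq L$, hence $\Q(\mu_5)\subseteq L$; as $[\Q(\mu_5):\Q]=4=[L:\Q]$ we would get $L=\Q(\mu_5)$, so $K$ would be the unique quadratic subfield of $\Q(\mu_5)$, namely $\Q(\sqrt{5})$, contradicting $K\in\{\Q(i),\Q(\sqrt{-3})\}$. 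Thus $E(L)[5^{\infty}]\simeq C_5$, and collecting cases yields $E(L)[p^{\infty}]\simeq C_3,\ C_5$ or $C_3\oplus C_3$. The only genuinely nonroutine step is this last Weil-pairing exclusion, which hinges on the real quadratic subfield $\Q(\sqrt{5})$ of $\Q(\mu_5)$ being distinct from both imaginary cyclotomic quadratic fields; everything else is bookkeeping against Theorem \ref{mainlist}.
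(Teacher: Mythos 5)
Your proof is correct, and it reaches the same destination as the paper via the same three key inputs (rational $2$-torsion on both twists, Najman's classification in Theorem \ref{mainlist}, and the Weil-pairing/cyclotomic argument), but the reduction step is organized differently. The paper first treats the case $E(L)[p^{\infty}]\simeq C_{p^k}$ \emph{cyclic} by a Galois-action argument: since $\Aut(C_{p^k})$ is cyclic, the nontrivial element of $\Gal(L/K)$ acts as $\pm 1$, so the point of order $p^k$ descends to $E(K)$ or to $E^d(K)$, where adding the $2$-torsion point gives an element of order $2p^k\leq 12$; it then handles $C_n\oplus C_n$ by the Weil pairing (with the identical $L=\Q(\mu_5)$, unique-quadratic-subfield-$\Q(\sqrt5)$ contradiction you give), and finally rules out $C_9\subseteq E(L)[3^{\infty}]$ by a separate appeal to Lemma \ref{lem1} and the impossibility of a point of order $18$. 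You instead invoke Lemma \ref{lem1} uniformly at the outset, writing $E(L)[p^{\infty}]\simeq E(K)[p^{\infty}]\oplus E^d(K)[p^{\infty}]$, which subsumes both the paper's cyclic case and its separate $C_9$ exclusion in a single order-doubling step; you also make explicit (via Theorem \ref{burton}(ii)) that $E^d$ carries rational $2$-torsion, a fact the paper uses tacitly. What each buys: your route is shorter and avoids any analysis of how $\Gal(L/K)$ acts, at the cost of leaning entirely on the twist decomposition; the paper's $\pm 1$ argument is more local, pinpointing \emph{which} twist acquires the point of order $2p^k$ without decomposing the whole $p$-primary part, which is the style of argument it reuses elsewhere (e.g.\ in the proof of Theorem \ref{mainthm1}). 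One small point to keep in mind: your step bounding odd element orders controls only the \emph{exponent} of each summand, not its rank, so your subsequent embedding of $E(L)[p]$ into $E[p]\simeq C_p\oplus C_p$ and the Weil-pairing exclusion of $C_5\oplus C_5$ over $L$ are genuinely needed to finish, and you correctly supply them.
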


\begin{proof}
Let $E(L)[p^{\infty}]\simeq C_n$ where $n=p^k$ with $p$ an odd prime and $k \geq 1.$ As the automorphism group Aut$(C_{n})$ is cyclic, the non-trivial element $\sigma$ of Gal$(L/K)$ acts as identity or as multiplication by $-1$ on $E(L)[p^{\infty}].$ So $E$ or its quadratic twist $E^d$ has a $K$-rational point of order $2n.$ This shows $n$ is at most $5,$ i.e. $p \leq 5.$ Moreover, $C_n \oplus C_n \subseteq E(L)[p^{\infty}]$ can only happen if $L$ has a primitive $n$th root of unity $\mu_n$. Let $\varphi$ denote Euler's totient function. It follows that
$ [\Q(\mu_n):\Q]=\varphi(n)=(p-1)p^{k-1} \leq [L : \Q] =4.$
Since $p$ is odd, the inequality holds when $n=3$ or $5.$ 
If $n=5$ then $L=\Q(\mu_5).$ This means that Gal$(L/\Q)$ is cyclic. Hence $\Q(\sqrt{5})$ is the unique intermediate subfield of $L,$ a contradiction since $K \subseteq L.$ Therefore, the largest value of $n$ with $C_n \oplus C_n \subseteq E(L)$ is $n=3.$ 
On the other hand, if $ C_9 \subseteq E(L)[3^{\infty}],$ (this can only happen if $L$ contains $\Q(\sqrt{-3}).$ 
then by Lemma \ref{lem1}, $E(K)$ or $E^d(K)$ has a point of order $18$ which is not possible by Theorem \ref{mainlist}, proving the statement.
\end{proof}

\begin{prop}\label{mainprop}
Let $K=\Q(\sqrt{D})$ with $D=-1,-3,$ $E/K$ any elliptic curve with $E(K)[2]\simeq C_2$ and let $L=K(\sqrt{d})/K$ be a quadratic extension with $d\in K$ non-square.
\begin{enumerate}

\item \label{4x8} $C_4 \oplus C_4 \not\subseteq E(L)$ for $K=\Q(i)$ and $C_4 \oplus C_8 \not\subseteq E(L)$ for $K=\Q(\sqrt{-3}).$

\item \label{6-6x6} If $E(K)_{\text{tor}}\simeq C_6$ then $E(L)_{\text{tor}} \not\simeq C_6 \oplus C_6.$

\item \label{12-2x24} $C_2 \oplus C_{24} \not\subseteq E(L).$

\item \label{30} $C_{N} \not\subseteq E(L)$ for $N=14,18, 20,30,32,48.$ 
\end{enumerate}
\end{prop}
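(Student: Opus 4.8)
The plan is to prove each of the four items of Proposition~\ref{mainprop} by combining the Galois-action constraints of Theorem~\ref{mainthm1}, the odd-torsion decomposition of Lemma~\ref{lem1}, the structural bound of Lemma~\ref{max_p}, and the isogeny classification of Theorem~\ref{isogeny}, always remembering that Theorem~\ref{mainlist} pins down which groups can even occur over $K=\Q(i)$ or $K=\Q(\sqrt{-3})$.

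For part~\eqref{4x8}, I would argue that over $K=\Q(i)$ the group $C_4\oplus C_4$ contains a primitive $4$th root of unity via the Weil pairing, forcing $\mu_4\subseteq L$; since $K=\Q(i)$ already contains $\mu_4$ and $E(K)[2]\simeq C_2$, I would show the required full $4$-torsion over $L$ is incompatible with the constraints coming from the $\Gal(L/K)$-action (in the spirit of Lemma~\ref{generallemma4} and Theorem~\ref{mainthm1}\eqref{2p-2x4}), since $E(K)[2^\infty]\simeq C_2$ rules out a $C_2\oplus C_4$ subgroup over $L$, hence a fortiori $C_4\oplus C_4$. For $K=\Q(\sqrt{-3})$, the claim $C_4\oplus C_8\not\subseteq E(L)$ should follow similarly: such a subgroup again forces $\mu_4\subseteq L$, and combined with $E(K)[2]\simeq C_2$ and Proposition~\ref{injection} (the index $|E(L)_{\text{tor}}/E(K)_{\text{tor}}|$ embeds cyclically into $E^d(K)_{\text{tor}}$) this should contradict the admissible structures; I expect to invoke Theorem~\ref{mainthm1}\eqref{4-4x8} after first producing a $K$-rational $4$-torsion point through the Galois action.

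For part~\eqref{6-6x6}, I would use Lemma~\ref{lem1} on the odd part: $E(L)[3^\infty]\supseteq C_3\oplus C_3$ would force $C_3\oplus C_3\subseteq E(K)[3^\infty]\oplus E^d(K)[3^\infty]$, and then Lemma~\ref{max_p} already says $C_3\oplus C_3$ is the maximal $3$-primary piece, so the growth from $C_6$ to $C_6\oplus C_6$ must come entirely from the odd part while the $2$-part stays $C_2$; I would then check that a full $C_6\oplus C_6\simeq (C_2\oplus C_3)\oplus(C_2\oplus C_3)$ requires full $2$-torsion over $L$, contradicting $E(K)[2]\simeq C_2$ via Theorem~\ref{burton}\eqref{2-part}. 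For part~\eqref{12-2x24}, which asserts $C_2\oplus C_{24}\not\subseteq E(L)$, the strategy is to reduce to the results already established: a subgroup $C_2\oplus C_{24}$ contains $C_2\oplus C_8$ in its $2$-part and a $3$-torsion point, so by Lemma~\ref{lem1} either $E(K)$ or $E^d(K)$ carries a point of order $3$, and combined with the $2$-adic growth constraints (Theorem~\ref{mainthm1}\eqref{4-2x24} and Lemma~\ref{generallemma4}) this should be incompatible with the list in Theorem~\ref{mainlist}; the main point is to track how the $C_2\oplus C_8$ piece can arise over $L$ from $E(K)[2]\simeq C_2$.

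Part~\eqref{30} is where the cycle classification does the heavy lifting, and I expect it to be the main obstacle because it packages together six different orders. For each $N\in\{14,18,20,30,32,48\}$ I would observe that $C_N\subseteq E(L)$ yields a cyclic $\Gal(\overline K/K)$-invariant subgroup of the corresponding order whose field of definition is a quadratic extension of $K$, i.e.\ an $N$-cycle pointwise rational over a quadratic extension; for $N=20,30$ this is directly excluded by Theorem~\ref{isogeny}, while $N=14$ reduces (via the $7$-part) to the $21$-cycle / Proposition~\ref{21-cycle} circle of ideas together with Proposition~\ref{cm_case} to rule out order-$7$ points in the CM cases, and $N=18$ reduces to an order-$9$ obstruction handled as in Lemma~\ref{max_p} (where $C_9\subseteq E(L)$ already gave a point of order $18$ over $K$, impossible by Theorem~\ref{mainlist}). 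The genuinely delicate cases are the $2$-power orders $N=32$ and $N=48$: for $N=32$ I would apply Theorem~\ref{mainthm1}\eqref{16point} to force a point of order $16$ on $E(K)$ or $E^d(K)$ and then contradict Theorem~\ref{mainlist} (which caps cyclic $2$-power torsion over these fields), and for $N=48=16\cdot 3$ I would combine the order-$16$ obstruction with Lemma~\ref{lem1} on the $3$-part to produce an order-$48$ point over $K$, again violating the Najman list. The hardest part throughout is ensuring that in each reduction the \emph{cyclic} and \emph{$\Gal$-invariance} hypotheses genuinely transfer, so that Theorem~\ref{isogeny} and the $p$-adic lemmas apply verbatim; I would make the passage from ``$C_N\subseteq E(L)$'' to ``$E$ has an $N$-cycle'' explicit using Lemma~\ref{n-cycle}.
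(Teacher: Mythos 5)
Your proposal has genuine gaps, and the most serious ones are in parts \eqref{4x8} and \eqref{6-6x6}, which simply cannot be proved from the Galois-action machinery you invoke: the paper needs modular-curve input there. For \eqref{4x8} over $\Q(i)$ you misapply Theorem \ref{mainthm1}\eqref{2p-2x4}: its hypothesis is $E(K)[2^{\infty}]\simeq C_2$, strictly stronger than the proposition's hypothesis $E(K)[2]\simeq C_2$, which allows $E(K)[2^{\infty}]\simeq C_4, C_8, C_{16}$; in those cases $C_2\oplus C_4\subseteq E(L)$ genuinely occurs (see Table \ref{table:3.2}), so your ``a fortiori'' step fails. Moreover the Weil pairing gives no obstruction over $\Q(i)$, since $\mu_4\subseteq K\subseteq L$ automatically. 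The paper instead cites \cite[Proposition 10.2]{Ej18} for $\Q(i)$, and for $K=\Q(\sqrt{-3})$ uses the Weil pairing only to pin down $L=K(i)=\Q(i,\sqrt{-3})$ and then shows that $X_1(4,8)(L)\simeq C_2\oplus C_4$ consists entirely of cusps. Similarly, your contradiction in \eqref{6-6x6} rests on Theorem \ref{burton}\eqref{2-part}, which assumes $E(K)[2]$ \emph{trivial}; with $E(K)[2]\simeq C_2$ there is no obstruction to full $2$-torsion over $L$ (the growth $C_2\to C_2\oplus C_2$ occurs in Table \ref{table:3.2}). The paper's actual argument uses Theorem \ref{burton}\eqref{p-pxp} and the Weil pairing to force $K=\Q(i)$ and $L=\Q(i,\sqrt{-3})$, and then a rank-zero computation (via the four quadratic twists over $\Q$) on $X_1(6,6): y^2=x^3+1$ showing all $L$-rational torsion points are cusps. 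Neither computation appears in your sketch, and nothing in Theorem \ref{mainthm1} substitutes for them.

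In parts \eqref{12-2x24} and \eqref{30} your reliance on Lemma \ref{n-cycle} plus Theorem \ref{isogeny} is right in spirit, but Lemma \ref{n-cycle} applies only when $E(L)[N]$ is itself cyclic, and the non-cyclic cases are exactly where the work lies. For $N=20$ the paper splits accordingly: the cyclic case gives a $20$-cycle against Theorem \ref{isogeny}, while the case $C_M\oplus C_{20}\subseteq E(L)$, $M\in\{2,4\}$, is killed by Lemmas \ref{lem1} and \ref{generallemma4}, which put a $K$-rational point of order $20$ on $E$ or $E^d$, contradicting \eqref{eq:kamiennygroups}. For $N=30$, and likewise in part \eqref{12-2x24}, one must \emph{construct} the Galois-invariant cyclic subgroup by hand — a $K$-rational $6$-torsion (resp.\ $8$-torsion) point of $E$ together with a $5$-torsion (resp.\ $3$-torsion) point of $E^d$ — before Theorem \ref{isogeny} applies; your ``directly excluded'' elides this. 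Your $N=48$ argument collapses outright: $C_{48}$ contains $C_{16}$, not $C_{32}$, so Theorem \ref{mainthm1}\eqref{16point} is unavailable, and a $16$-torsion point over $L$ does not descend to $K$ (the growth $C_8\to C_{16}$ exists), so no order-$48$ point over $K$ can be manufactured. The paper argues instead: if $E(L)[48]\simeq C_{48}$ then $E(L)[24]\simeq C_{24}$ is cyclic, giving a $24$-cycle against Theorem \ref{isogeny}; otherwise $C_M\oplus C_{48}\subseteq E(L)$ forces $M=2$ by the quartic classification of \cite{BN16}, whence $C_2\oplus C_{24}\subseteq E(L)$, contradicting part \eqref{12-2x24} — both ingredients are absent from your proposal. (Your $N=14$ detour through Proposition \ref{21-cycle} and Proposition \ref{cm_case} is also off target: the paper simply applies Theorem \ref{mainthm1}\eqref{2p^k} to get a $K$-rational point of order $14$, impossible by Theorem \ref{mainlist}. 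Of the six values of $N$, only your treatment of $N=32$ matches the paper.)
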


\begin{proof}
$\textit{(1)}$ If $K=\Q(i),$ then the statement follows from \cite[Proposition 10.2]{Ej18}. Let $K=\Q(\sqrt{-3})$ and $C_4 \oplus C_8 \subseteq E(L)$. Then $L=K(i)$ by the Weil Pairing and so the modular curve $X_1(4,8)$ has a non-cuspidal $L$-rational point. However, $X_1(4,8)$ is isomorphic (over $\Q(i)$) to the elliptic curve with the Cremona label 32a2 \cite[Lemma 13]{Naj} and 
$$ X_1(4,8)(L)=X_1(4,8)(\Q(i,\sqrt{-3})) \simeq C_2 \oplus C_4$$
which consists entirely of cusps. Therefore, we get a contradiction. 

$\textit{(2)}$ Let $E(K)_{\text{tor}}\simeq C_6$ and $E(L)_{\text{tor}} \simeq C_6 \oplus C_6.$ By Theorem \ref{burton}\eqref{p-pxp} and the Weil Pairing, $K=\mathbb{Q}(i)$ and  $L=\Q(i,\sqrt{-3}).$ Equivalently, the modular curve $X_1(6,6)$ has a non-cuspidal $L$-rational point. 
%that is defined over $\Q(\sqrt{-3}).$ 
Due to \cite[Lemma 14]{Naj}, it has a model over $\mathbb{Q}$ 
such that $ X_1(6,6): y^2 = x^3+1 $
and its cusps satisfy: $x(x - 2)(x + 1)(x^2 - x + 1)(x^2 + 2x + 4) = 0.$ Let $X=X_1(6,6).$ We compute
\begin{align*}
rk(X(L))&=rk(X(\Q(\sqrt{-3}))+rk(X^{(-1)}(\Q(\sqrt{-3})) \\
&= rk(X(\Q))+rk(X^{(-3)}(\Q))+rk(X^{(-1)}(\Q))+rk(X^{(3)}(\Q)) \\
&=0.
\end{align*}
and $X(L)_{\text{tor}} \simeq C_2 \oplus C_6.$ However, all these torsion points are cusps, a contradiction.

$\textit{(3)}$ By the previous assertion, it suffices to show that $E(L)_{\text{tor}}\not\simeq C_2 \oplus C_{24}.$ Let $E(L)_{\text{tor}}\simeq C_2 \oplus C_{24}.$ By Theorem \ref{mainthm1}\eqref{2p-2x4},\eqref{4-2x24} using Lemma \ref{lem1}, we assume $E(K)_{\text{tor}}\simeq C_8$ with a $K$-rational $3$-torsion point $P$ on its quadratic twist $E^d.$ Then $E(K)_{\text{tor}} \oplus \langle P \rangle$ is isomorphic to a Gal$(L/K)$-invariant cyclic subgroup $C$ of $E(L)$ with order 24. But, this is not possible by Theorem \ref{isogeny}.

$\textit{(4)}$ Suppose $C_{N} \subseteq E(L)$ for $N \in \{14,18,20,30,32,48\}$. If $N=14, 18$ then by Theorem \ref{mainthm1}$(ii)$ $E$ or $E^d$ has a $K$-rational point of order 14 and 18 respectively. But this is not possible by Theorem \ref{mainlist}. 

If $N=20,$ then by Theorem \ref{isogeny} using Lemma \ref{n-cycle} we see that $E(L)[20]$ is not cyclic. Furthermore, $E(L)[20]$ contains a subgroup of the form $C_M \oplus C_{20}$ where $M \in \{2,4\}$ by Lemma \ref{max_p}. It then follows from Lemma \ref{lem1} and \ref{generallemma4} that $E$ or $E^d$ has a $K$-rational point of order $20,$ which is a contradiction by \eqref{eq:kamiennygroups}. 

If $N=30,$ then replacing $E$ with its quadratic twist $E^d$ if necessary, we may assume that $E(K)$ has a $6$-torsion point $Q$ and $E^d(K)$ has a $5$-torsion point $P$. Then $C:=\langle Q \rangle \oplus \langle P \rangle$ forms a cyclic Gal$(L/K)$-invariant subgroup of $E(L)_{\text{tor}}$ of order $30,$ which cannot happen due to Theorem \ref{isogeny}. If $N=32,$ then the statement follows from Theorem \ref{mainthm1}\eqref{16point} and \ref{mainlist}. 

If $N=48$ and $E(L)[48]\simeq C_{48}$, then $E(L)[24] \simeq C_{24}$. But $E(L)[24]$ cannot be cyclic by Lemma \ref{n-cycle} and Theorem \ref{isogeny}. If $E(L)[48] \simeq C_{M} \oplus C_{48}$ for $M\neq 1$ dividing 48, then $M=2$ since $C_3 \oplus C_{12}$ and $C_4 \oplus C_{12}$ cannot occur as torsion subgroups over quartic number fields by \cite[Theorem 8]{BN16}. Hence $E(L)[48]\simeq C_2 \oplus C_{48}$ and so it contains the subgroup $C_2 \oplus C_{24}$ which is ruled out by the previous assertion.
\end{proof}
The following table summarizes our results for which we assume $E(K)[2]\simeq C_2.$
\begin{table}[h!]
\centering
\caption{}
\begin{tabular}{ | p{2.9cm}|p{1.3cm}|p{1.3cm}|p{1.3cm}|p{1.3cm}|p{1.3cm}|p{1.3cm}|p{1.3cm}|}
 \hline
$E(L)_{\text{tor}} \backslash E(K)_{\text{tor}}$ & $C_2$  & $C_4$ & $C_6$  & $C_8$ &  $C_{10}$ &  $C_{12}$ &  $C_3 \oplus C_6$   \\
\hline
  $C_2$ & $\checkmark$ & $\rule{.5cm}{0.8pt}$ & $\rule{.5cm}{0.8pt}$ & $\rule{.5cm}{0.8pt}$&$\rule{.5cm}{0.8pt}$&$\rule{.5cm}{0.8pt}$ & $\rule{.5cm}{0.8pt}$  \\
 \hline 
  $C_4$ &$\checkmark$ & $\checkmark$& $\rule{.5cm}{0.8pt}$ & $\rule{.5cm}{0.8pt}$ & $\rule{.5cm}{0.8pt}$  & $\rule{.5cm}{0.8pt}$ & $\rule{.5cm}{0.8pt}$  \\
  \hline
  $C_6$ & $\checkmark$  &$\rule{.5cm}{0.8pt}$ & $\checkmark$  & $\rule{.5cm}{0.8pt}$   & $\rule{.5cm}{0.8pt}$    &  $\rule{.5cm}{0.8pt}$ & $\rule{.5cm}{0.8pt}$  \\
 \hline
 $C_8$ &$\checkmark$    & $\checkmark$  &  $\rule{.5cm}{0.8pt}$   & $\checkmark$    &  $\rule{.5cm}{0.8pt}$     & $\rule{.5cm}{0.8pt}$ &  $\rule{.5cm}{0.8pt}$  \\
 \hline
 $C_{10}$ &$\checkmark$    & $\rule{.5cm}{0.8pt}$   & $\rule{.5cm}{0.8pt}$   &$\rule{.5cm}{0.8pt}$   &$\checkmark$  &$\rule{.5cm}{0.8pt}$ & $\rule{.5cm}{0.8pt}$ \\
 \hline
 $C_{12}$ &$\checkmark$  &$\checkmark$    &$\checkmark$   &$\rule{.5cm}{0.8pt}$   & $\rule{.5cm}{0.8pt}$     &$\checkmark$ &$\rule{.5cm}{0.8pt}$ \\
 \hline
  $C_{14} \not\subseteq E(L)$ & \eqref{30}   & $\rule{.5cm}{0.8pt}$   & $\rule{.5cm}{0.8pt}$  & $\rule{.5cm}{0.8pt}$    &$\rule{.5cm}{0.8pt}$    & $\rule{.5cm}{0.8pt}$ &$\rule{.5cm}{0.8pt}$ \\
 \hline
 $C_{16}$ &$\checkmark$ & \eqref{4-16} & $\rule{.5cm}{0.8pt}$    & $\checkmark$  &$\rule{.5cm}{0.8pt}$   &$\rule{.5cm}{0.8pt}$ & $\rule{.5cm}{0.8pt}$  \\
 \hline
 $C_{18} \not\subseteq E(L)$ & \eqref{30} & $\rule{.5cm}{0.8pt}$  &\eqref{30}   & $\rule{.5cm}{0.8pt}$    &  $\rule{.5cm}{0.8pt}$    & $\rule{.5cm}{0.8pt}$ & $\rule{.5cm}{0.8pt}$ \\
 \hline
 $C_{20} \not\subseteq E(L)$& \eqref{30}  & \eqref{30}   & $\rule{.5cm}{0.8pt}$   & $\rule{.5cm}{0.8pt}$   &\eqref{30}  &$\rule{.5cm}{0.8pt}$ &$\rule{.5cm}{0.8pt}$ \\
 \hline
 $C_{24}$&  \ref{isogeny}  & \ref{isogeny}   & \ref{isogeny}   & \ref{isogeny}   & $\rule{.5cm}{0.8pt}$ &\ref{isogeny} &$\rule{.5cm}{0.8pt}$ \\
 \hline
 $C_{30} \not\subseteq E(L) $ &\eqref{30}  & $\rule{.5cm}{0.8pt}$  &\eqref{30}   &  $\rule{.5cm}{0.8pt}$   &\eqref{30}  & $\rule{.5cm}{0.8pt}$ &$\rule{.5cm}{0.8pt}$ \\
 \hline
 $C_{32} \not\subseteq E(L)$&\eqref{30}  &\eqref{30}  & $\rule{.5cm}{0.8pt}$ &\eqref{30} & $\rule{.5cm}{0.8pt}$  & $\rule{.5cm}{0.8pt}$ & $\rule{.5cm}{0.8pt}$ \\
 \hline
 $C_{48} \not\subseteq E(L)$ & \eqref{30} & \eqref{30} & \eqref{30}   & \eqref{30} & $\rule{.5cm}{0.8pt}$  & \eqref{30} & $\rule{.5cm}{0.8pt}$ \\
 \hline
 $C_2 \oplus C_2$ & $\checkmark$   & $\rule{.5cm}{0.8pt}$ &  $\rule{.5cm}{0.8pt}$  & $\rule{.5cm}{0.8pt}$ &  $\rule{.5cm}{0.8pt}$  &  $\rule{.5cm}{0.8pt}$  & $\rule{.5cm}{0.8pt}$ \\
 \hline
 $C_2 \oplus C_4$ & \eqref{2p-2x4} & $\checkmark$  & \eqref{2p-2x4}  &  $\rule{.5cm}{0.8pt}$  &  $\eqref{2p-2x4}$   & $\rule{.5cm}{0.8pt}$ &  $\rule{.5cm}{0.8pt}$ \\
 \hline
 $C_2 \oplus C_6$ & $\checkmark$ & $\rule{.5cm}{0.8pt}$ &  $\checkmark$ &$\rule{.5cm}{0.8pt}$    & $\rule{.5cm}{0.8pt}$   &  $\rule{.5cm}{0.8pt}$& $\rule{.5cm}{0.8pt}$  \\
 \hline
 $C_2 \oplus C_8$ &\eqref{2p-2x4}   & $\checkmark$    &  $\rule{.5cm}{0.8pt}$  &  $\checkmark$     & $\rule{.5cm}{0.8pt}$  & $\rule{.5cm}{0.8pt}$ & $\rule{.5cm}{0.8pt}$ \\
 \hline
 $C_2 \oplus C_{10}$ & $\checkmark$ & $\rule{.5cm}{0.8pt}$ &  $\rule{.5cm}{0.8pt}$   & $\rule{.5cm}{0.8pt}$ &  $\checkmark$  & $\rule{.5cm}{0.8pt}$& $\rule{.5cm}{0.8pt}$ \\
 \hline
 $C_2 \oplus C_{12}$ & \eqref{2p-2x4} & $\checkmark$  & \eqref{2p-2x4} & $\rule{.5cm}{0.8pt}$ & $\rule{.5cm}{0.8pt}$  & $\checkmark$  & $\rule{.5cm}{0.8pt}$\\
 \hline
 $C_2 \oplus C_{16}$  & \eqref{2p-2x4} & \eqref{8-2x16} & $\rule{.5cm}{0.8pt}$ &  \eqref{8-2x16} & $\rule{.5cm}{0.8pt}$ &  $\rule{.5cm}{0.8pt}$ & $\rule{.5cm}{0.8pt}$ \\
 \hline
 $C_2 \oplus C_{24}$ & \eqref{2p-2x4}  &\eqref{4-2x24} &\eqref{2p-2x4}  & \eqref{12-2x24} & $\rule{.5cm}{0.8pt}$   & \eqref{12-2x24} &  $\rule{.5cm}{0.8pt}$\\
 \hline
 $C_3 \oplus C_6$ & $\eqref{3x3}$, $\checkmark$  & $\rule{.5cm}{0.8pt}$ &  $ $\checkmark$, \eqref{p-pxp}$ & $\rule{.5cm}{0.8pt}$   & $\rule{.5cm}{0.8pt}$ & $\rule{.5cm}{0.8pt}$ &   $\checkmark$ \\
 \hline
 $C_3 \oplus C_{12} \not\subseteq E(L)$ &  \cite{BN16}& \cite{BN16}  & \cite{BN16} & $\rule{.5cm}{0.8pt}$   &  $\rule{.5cm}{0.8pt}$   & \cite{BN16} & \cite{BN16} \\
 \hline
 $C_6 \oplus C_6$ &  $\checkmark$   & $\rule{.5cm}{0.8pt}$   &  \eqref{6-6x6} &  $\rule{.5cm}{0.8pt}$   &  $\rule{.5cm}{0.8pt}$   &   $\rule{.5cm}{0.8pt}$ & $\checkmark$ \\
 \hline
 $C_4 \oplus C_4$ &  \eqref{2p-2x4} & \eqref{4x8}, $\checkmark$ & $\rule{.5cm}{0.8pt}$  &$\rule{.5cm}{0.8pt}$  &$\rule{.5cm}{0.8pt}$  & $\rule{.5cm}{0.8pt}$ &$\rule{.5cm}{0.8pt}$  \\
 \hline
 $C_4 \oplus C_8 \not\subseteq E(L)$ &\eqref{4x8}  & \eqref{4x8}   &  $\rule{.5cm}{0.8pt}$  &\eqref{4x8}   &$\rule{.5cm}{0.8pt}$   & $\rule{.5cm}{0.8pt}$ & $\rule{.5cm}{0.8pt}$\\
 \hline
 $C_4 \oplus C_{12} \not\subseteq E(L) $ & \cite{BN16}  & \cite{BN16}   & \cite{BN16}  & \cite{BN16}  & $\rule{.5cm}{0.8pt}$   & \cite{BN16} & \cite{BN16} \\
\hline
\end{tabular}
\label{table:3.2}
\end{table}

\subsection*{Proof of Theorem \ref{mainthm2}}
The proof follows from the case studies as detailed out in Table \ref{table:3.2} based on Theorem \ref{mainlist}.

\subsection{Cyclic Odd-order Torsion}
\begin{lem}\label{full}
Let $K=\mathbb{Q}(\sqrt{D})$ ($D = -1, -3$), $E/K$ an elliptic curve and $L$ a quadratic extension of $K$. Then the only odd prime power n such that $C_n \oplus C_n  \subseteq E(L) $ is $n=3$.
\end{lem}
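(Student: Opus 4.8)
The plan is to combine the Weil pairing with a degree count over $\mathbb{Q}$, exactly in the spirit of the argument already used in Lemma \ref{max_p}. First I would record the basic constraint: suppose $n = p^k$ is an odd prime power (with $n > 1$) such that $C_n \oplus C_n \subseteq E(L)$. Since $L$ contains the full $n$-torsion, the Weil pairing shows that $L$ contains a primitive $n$th root of unity $\mu_n$, so $\mathbb{Q}(\mu_n) \subseteq L$. Because $[L:\mathbb{Q}] = 4$, the tower law forces $\varphi(n) = [\mathbb{Q}(\mu_n):\mathbb{Q}]$ to divide $4$; in particular $\varphi(n) \leq 4$.

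Next I would carry out the elementary number theory. For an odd prime power $n = p^k > 1$ one has $\varphi(p^k) = p^{k-1}(p-1)$, which is always even, so $\varphi(n) \in \{2,4\}$. The equation $p^{k-1}(p-1) = 2$ forces $p = 3$, $k = 1$, i.e. $n = 3$, while $p^{k-1}(p-1) = 4$ forces $p = 5$, $k = 1$, i.e. $n = 5$; every other odd prime power has $\varphi(n) \geq 6$ and is thus excluded. Hence the only candidates are $n = 3$ and $n = 5$, and it remains to rule out $n = 5$.

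The one substantive step is excluding $n = 5$, and it is where the hypothesis on $K$ enters. If $n = 5$, then $\mu_5 \in L$ and $\varphi(5) = 4 = [L:\mathbb{Q}]$, so $L = \mathbb{Q}(\mu_5)$. But $\mathrm{Gal}(\mathbb{Q}(\mu_5)/\mathbb{Q}) \cong (\mathbb{Z}/5\mathbb{Z})^{\ast}$ is cyclic of order $4$, so $L$ admits a unique quadratic subfield, namely the real field $\mathbb{Q}(\sqrt{5})$. Since $K = \mathbb{Q}(i)$ or $\mathbb{Q}(\sqrt{-3})$ is an imaginary quadratic subfield of $L$, this is a contradiction. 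Therefore $n = 5$ cannot occur, leaving $n = 3$ as the only odd prime power for which $C_n \oplus C_n \subseteq E(L)$ is possible. I would close by noting that $n = 3$ is genuinely attained (for instance $C_3 \oplus C_3 \subseteq E(\mathbb{Q}(\sqrt{-3}))$ by Theorem \ref{mainlist}), so the bound is sharp. The proof presents no real obstacle beyond this field-theoretic observation: the entire content is that an imaginary quadratic field cannot sit inside $\mathbb{Q}(\mu_5)$, whose unique quadratic subfield is real.
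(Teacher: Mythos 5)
Your proposal is correct, but at the decisive step it diverges from the paper's proof of this lemma. Both arguments open identically: the Weil pairing puts $\mu_n$ in $L$, and $[\Q(\mu_n):\Q]=\varphi(n)\leq[L:\Q]=4$ reduces the odd prime powers to $n=3$ or $n=5$. To kill $n=5$, the paper multiplies roots of unity: since $K=\Q(i)$ or $\Q(\sqrt{-3})$ already contains a $4$th or $3$rd root of unity, $\mu_5\in L$ would force $\mu_{20}$ or $\mu_{15}$ into $L$, and $\varphi(20)=\varphi(15)=8>4$ gives the contradiction — a pure degree count with no Galois theory, but one that genuinely uses the presence of small roots of unity in these two specific fields. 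You instead observe that $\mu_5\in L$ forces $L=\Q(\mu_5)$, whose Galois group over $\Q$ is cyclic of order $4$, so its unique quadratic subfield is the real field $\Q(\sqrt{5})$, which cannot contain the imaginary field $K$. Your route is the one the paper itself uses in the proof of Lemma \ref{max_p}, so it is certainly sound, and it is in fact slightly more general: it excludes $n=5$ for every quadratic $K\neq\Q(\sqrt{5})$, not just the two cyclotomic fields (imaginarity is merely a convenient way to see $K\neq\Q(\sqrt{5})$). One small point of coverage: the paper closes by exhibiting the explicit curve $[0,-1,1,217,-282]$ with full $3$-torsion over $\Q(\sqrt{-3})$, realizing $n=3$ for both choices of $K$ via $L=\Q(i,\sqrt{-3})$; your realization remark, citing Theorem \ref{mainlist}, directly handles only $K=\Q(\sqrt{-3})$, though the same biquadratic extension settles $K=\Q(i)$ as well — worth a sentence if you want the sharpness claim for both fields.
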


\begin{proof}
Let $\phi$ denote Euler's totient function.  If $n=p^t$ where $p$ is a prime and $C_n \oplus C_n  \subseteq E(L) $ then by the Weil pairing, %\cite{Sil09}*{Cor 8.1.1}
$L$ contains an $n$th root of unity $\mu_n$.  Hence $ (p-1)p^{t-1} =  \phi(n) \leq [L:\mathbb{Q}] =4$ so $n=2,3,4,5$ or $8$. Note there is either a 3rd or 4th root of unity in $K$, 
so if $\mu_5$ is in $L$, then there is a 15th or 20th root of unity in $L$. 
But $\phi(15)=8 > 4$ and $\phi(20)=8 >4$, a contradiction.  
On the other hand, there is an elliptic curve $E/\mathbb{Q}$ (namely $[0,-1,1, 217, -282]$) which has full 3-torsion over $K=\mathbb{Q}(\sqrt{-3})$ and hence provides examples in each case with $L=\mathbb{Q}(\sqrt{-1},\sqrt{-3})$.
\end{proof}

\begin{prop}
\label{oddtwist}
Let $K=\mathbb{Q}(\sqrt{D})$ ($D = -1, -3),$  d $ \in$ $K$ a non-square, and $E/K$ an elliptic curve.

\begin{enumerate}
\item If $E(K)_{tor} \simeq C_7, C_9 \mbox{ or }  C_3 \oplus C_3, $ then  $E^d(K)_{tor} \simeq C_1. $

\item If $E(K)_{tor} \simeq C_5 $ then  $E^d(K)_{tor} \simeq C_1 \mbox{ or } C_3. $

\item If $E(K)_{tor} \simeq C_3 $ then  $E^d(K)_{tor} \simeq C_1, C_3 \mbox{ or } C_5. $

\item If $E(K)_{tor} \simeq C_1 $ then  $E^d(K)_{tor} \simeq C_1, C_3, C_5, C_7, C_9 \mbox{ or } C_3 \oplus C_3. $
\end{enumerate}
Hence the torsion structures $C_7, C_9$ and $C_3 \oplus C_3$ do not grow in any quadratic extension of $K$.  The torsion structures $C_3$ and $C_5$ grow in at most 1 extension.
%, and $C_1$ grows in at most 2 extensions.
\end{prop}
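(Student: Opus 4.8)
The plan is to exploit that $E^{d}$ is again an elliptic curve over $K$ and to read off which pairs $\{E(K)_{\mathrm{tor}},E^{d}(K)_{\mathrm{tor}}\}$ can occur. In every case the hypothesis group is odd, so $E(K)[2]=0$; by Theorem \ref{burton}(ii) this gives $E^{d}(K)[2]\simeq E(K)[2]=0$, whence $E^{d}(K)_{\mathrm{tor}}$ also has odd order. By Theorem \ref{mainlist} the odd-order torsion groups over $K$ are exactly $C_{1},C_{3},C_{5},C_{7},C_{9}$ and, when $K=\Q(\sqrt{-3})$, also $C_{3}\oplus C_{3}$. This proves (4) outright and reduces (1)--(3) to deciding which pairs from this list can occur simultaneously as $E(K)_{\mathrm{tor}}$ and $E^{d}(K)_{\mathrm{tor}}$. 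Since twisting by $d$ twice recovers $E$, the relation is symmetric, so it suffices to forbid the offending pairs.

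I would eliminate pairs with three inputs. First, Lemma \ref{lem1} gives $E(L)[p^{\infty}]\simeq E(K)[p^{\infty}]\oplus E^{d}(K)[p^{\infty}]$ for $L=K(\sqrt d)$; at $p=3$ the group $E(L)[3]$ has rank at most $2$, so if one member is $C_{3}\oplus C_{3}$ the other can have no $3$-torsion, while at the odd prime powers $5$, $7$, $9$ Lemma \ref{full} forbids $C_{5}\oplus C_{5}$, $C_{7}\oplus C_{7}$, $C_{9}\oplus C_{9}$. Second, if $E(K)$ and $E^{d}(K)$ contain points of coprime orders $a$ and $b$, then the subgroup they span in $E(L)$ is cyclic of order $ab$ and $\Gal(\overline{K}/K)$-invariant (invariance is a Chinese-remainder computation, using that $\sigma$ fixes the $E(K)$-part and negates the $E^{d}(K)$-part), i.e.\ an $ab$-cycle. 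Theorem \ref{isogeny} then rules these out: a point of order $7$ with any point of order $3$ gives a $21$-cycle (excluding $\{C_{3},C_{7}\}$, $\{C_{9},C_{7}\}$, $\{C_{3}\oplus C_{3},C_{7}\}$), orders $5$ and $7$ give a $35$-cycle ($\{C_{5},C_{7}\}$), and orders $9$ and $5$ give a $45$-cycle ($\{C_{9},C_{5}\}$). Third, for $\{C_{5},C_{3}\}$ and $\{C_{5},C_{3}\oplus C_{3}\}$ one obtains a $15$-cycle, so by Theorem \ref{isogeny} $E$ is one of four explicit curves with $E(L)_{\mathrm{tor}}\simeq C_{15}$; as $C_{15}$ contains neither a second independent $3$-torsion point nor $C_{3}\oplus C_{3}$, the pair $\{C_{5},C_{3}\}$ is genuinely realized (growth to $C_{15}$) while $\{C_{5},C_{3}\oplus C_{3}\}$ is impossible. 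Assembling these exclusions reproduces the lists in (1)--(3) except for a single pair.

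That remaining pair is $\{C_{3},C_{9}\}$, i.e.\ $E(K)\simeq C_{9}$ and $E^{d}(K)\simeq C_{3}$, giving $E(L)\simeq C_{3}\oplus C_{9}$: here the exponent is only $9$, so no forbidden cycle appears, and the $3$-rank is exactly $2$, so Lemma \ref{lem1} is silent. This is the main obstacle. A Weil-pairing computation (the $K$-rational and the anti-invariant $3$-torsion generators pair to $\zeta_{3}$ with $\sigma(\zeta_{3})=\zeta_{3}^{-1}$) shows it can only happen for $K=\Q(i)$ and $L=\Q(i,\sqrt{-3})$, so the task becomes: no $E/\Q(i)$ with a rational point of order $9$ acquires full $3$-torsion over $\Q(i,\sqrt{-3})$. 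I would argue exactly as in Proposition \ref{21-cycle}: parametrize the rational curve $X_{1}(9)$ by a Kubert family $E_{t}$, observe that $E_{t}$ acquires a second rational $3$-torsion subgroup precisely when the cubic cofactor of the third division polynomial $\psi_{3}(E_{t})$ has a $K$-rational root, and analyze the resulting plane curve $g(t,x)=0$. Desingularizing, computing its genus, and determining its $K$-rational points (via a rank-zero Jacobian and, if necessary, the symmetric-square method of Proposition \ref{21-cycle}) should show that every such point is a cusp or forces $j\in\{0,1728\}$, the latter being excluded by Proposition \ref{cm_case}. This disposes of $\{C_{3},C_{9}\}$ and completes (1) and (3).

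Finally, the two concluding assertions follow formally. If $E(K)_{\mathrm{tor}}\in\{C_{7},C_{9},C_{3}\oplus C_{3}\}$ then $E^{d}(K)_{\mathrm{tor}}=C_{1}$ for every non-square $d$, so Proposition \ref{injection} forces $E(L)_{\mathrm{tor}}=E(K)_{\mathrm{tor}}$ in every quadratic extension and there is no growth. If $E(K)_{\mathrm{tor}}\in\{C_{3},C_{5}\}$, growth in $L=K(\sqrt d)$ requires $E^{d}(K)_{\mathrm{tor}}\neq C_{1}$; extra $3$-torsion determines $d$ uniquely (the $3$-division field is a single quadratic field) and extra $5$-torsion forces $E$ to be one of the four explicit $15$-cycle curves, and a short check that these two kinds of growth cannot occur for the same curve shows that growth occurs in at most one quadratic extension.
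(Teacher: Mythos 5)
Most of your proposal tracks the paper's actual proof quite closely: the reduction to odd-order twists via Theorem \ref{burton}(ii) and the list in Theorem \ref{mainlist}, the $3$-rank bound from Lemma \ref{lem1}, Lemma \ref{full} to kill the pairs $\{C_m,C_m\}$ for $m=5,7,9$, and the production of Galois-invariant cyclic subgroups of composite order that are then ruled out by Theorem \ref{isogeny}. (The paper routes the cycle construction through \cite[Prop 3.1]{Bal(2)21} rather than your direct Chinese-remainder argument, and for $\{C_3\oplus C_3, C_5\}$ it uses a $45$-cycle where you use a $15$-cycle plus the computed torsion $C_{15}$ of the four exceptional curves; both of these are acceptable variants.) The problem is the pair you yourself flag as the main obstacle, $\{C_3,C_9\}$: there you do not give a proof, only a research plan. ``Desingularizing, computing its genus, and determining its $K$-rational points \ldots should show'' is not an argument --- the genus of the curve cut out by the cubic cofactor of the third division polynomial over the Kubert $X_1(9)$-family, the rank of its Jacobian, and the viability of the symmetric-square method of Proposition \ref{21-cycle} for it are all unverified, and nothing you write guarantees a rank-zero Jacobian or that the point determination closes. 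Since statement (1) for $C_9$ and the absence of $C_9$ from the list in statement (3) hinge exactly on this exclusion, the proposal as it stands has a genuine gap.

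The paper disposes of this case without any new curve computation: when $E(K)[3]\not\simeq C_1$ and $E^d(K)\simeq C_9$, it invokes \cite[Prop 3.1]{Bal(2)21} to produce a $27$-cycle pointwise rational over $K(\sqrt{d})$, and the $N=27$ analysis of Section 4 (over both fields $X_0(27)$ has rank $0$, its torsion does not grow, and the non-cuspidal points give $f_C$ with irreducible factors of degrees $(1,3,9)$) shows no such cycle exists. (Note the statement of Theorem \ref{isogeny} omits $N=27$, but the body of Section 4 covers it, which is what the paper's proof actually uses.) Your Weil-pairing observation that the bad pair could only occur with $K=\Q(i)$ and $L=\Q(i,\sqrt{-3})$ is correct, but it does not reduce the workload enough to make your sketched computation dispensable; to repair the proposal, replace that sketch with the $27$-cycle reduction to the $X_0(27)$ analysis.
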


\begin{proof}
Let $d \in K$ be a non-square.  Note that if $E'$ is a quadratic twist of $E$ then $E$ is a quadratic twist of $E'$ (up to isomorphism over $K$).  Also by Theorem \ref{burton}$(ii)$, all quadratic twists of a curve with odd order torsion will be odd order. By Theorem \ref{mainlist},
the possible nontrivial odd-order torsion structures occurring over $K$ are $C_3,C_5,C_7,C_9$ and $C_3 \oplus C_3$.

Suppose $E(K)[3] \not\simeq C_1.$ Notice that $E^d(K)[3]$ must be cyclic by Lemma \ref{lem1}. If $E^d(K) \simeq C_7$ or $C_9$, then by \cite[Prop 3.1]{Bal(2)21} $E^d$ has a 21-cycle and 27-cycle pointwise rational over $K(\sqrt{d})$, respectively. But no such cycle exists by Theorem \ref{isogeny}. If $E(K)[3] \simeq C_3 \oplus C_3$ and $E^d(K) \simeq C_5$, then by \cite[Prop 3.1]{Bal(2)21} $E^d$ has a 45-cycle pointwise rational over $K(\sqrt{d})$ which is ruled out by Theorem \ref{isogeny}. If $E(K)[3] \simeq C_3$ and $E^d(K) \simeq C_5$, then by \cite[Prop 3.1]{Bal(2)21} $E^d$ has a 15-cycle pointwise rational over a quadratic extension of $K.$ There are four elliptic curves (two pairs of quadratic twists) over $K$ (up to isomorphism over $K$) with such a cycle. For each such curve $E$ (we actually need only check one member of each pair), the factorization of the $3$-division polynomial of $E$ indicates that the nontrivial torsion structures occurring among the quadratic twists of E are $C_3$ and $C_5$ and each occurs exactly once.

If $E(K)_{tor} \simeq C_m$ and $ E^d(K)_{tor} \simeq C_m$ for $m=5,7$, then by Lemma \ref{lem1}, the group $C_m \oplus C_m$ is contained in $E(K(\sqrt{d})$, contradicting Lemma \ref{full}.  
If $E(K)[5]  \simeq C_5$ and $E^d(K) \simeq C_7$ or $C_9$, then by \cite[Prop 3.1]{Bal(2)21} $E^d$ has a 35-cycle or 45-cycle pointwise rational over a quadratic extension, respectively.  But this is not possible by Theorem \ref{isogeny}. 

\end{proof}

\subsection*{Proof of Theorem \ref{burtonmainthm}}
It follows from Proposition \ref{oddtwist} using Theorem \ref{mainlist} and Lemma \ref{lem1}.

\section{Algorithm for Examples of Growth}

Given an elliptic curve $E/K$ with a list of all possible torsion structures over $K$ (as in Theorem \ref{mainlist}), we give an algorithm which computes all quadratic extensions $L/K$ in which $E(K)_{\text{tor}}\subsetneq E(L)_{\text{tor}}$ and describes $E(L)_{\text{tor}}$ in each such case. See the \href{https://github.com/BaseChangeTorsion/The-Effect-of-Quadratic-Base-Change-on-Torsion}{{\bf{code}}} for its implementation. 
%using the database \cite{lmfdb}. 
We now give an informal description of the algorithm below.
%Note that $\psi_n$ is the $n$th division polynomial associated to $E.$
\vspace{.1cm}

Input: $d = -1$ or $-3$ and an elliptic curve $E/K$ where $K = \Q(\sqrt{d}).$
\vspace{.1cm}

Output: List of all quadratic extensions where torsion grows and the torsion structure in each case.

\begin{enumerate}
    \item Let $S := \{2, 3, 5, 7\}$ be the set of all primes $p$ for which there is an elliptic curve $E/K$
with $p\ | \ E(K)_{\text{tor}}$. 

    \item For each prime $p$ in $S$, determine the division polynomial $f_p$ of smallest degree
necessary to detect growth of the $p$-part of $E(K)_{\text{tor}}$ upon quadratic base change:  %a quadratic extension of $K$:
 \begin{enumerate}[(a)]
    \item Find the primary decomposition of $E(K)_{\text{tor}}.$ 

    \item For each prime $p$ in $S$, count the number $S_p$ of $p$-summands in the $p$-part of $E(K)_{\text{tor}}.$ 
    
\begin{enumerate}[(i)]
 \item For $p=2$:
    \begin{itemize}
    \item If $S_p=0$ then $E(K)[2^{\infty}]$ does not grow in a quadratic extension by Theorem \ref{burton}. 
    \item If $S_p \neq 0$ and $E(K)[2^{\infty}] \simeq [2^a,2^b]$ with $a<b$ then $f_p=\psi_{2^b}.$
    \end{itemize}
    \item For $p=3$:
    \begin{itemize}
        \item If $S_p=0$ or $1$ then if any growth occurs, by Lemma \ref{lem1}, $E(K)[p]$ must grow by Theorem \ref{burton}, so we let $f_p:=\psi_p.$
        \item If $S_p=2$ then the $p$-part cannot grow in a quadratic extension by Lemma \ref{lem1}, so let $f_p=1.$
    \end{itemize}
        \item For $p>3$: 
        \begin{itemize}
        \item If $S_p=0$ then if any growth occurs, by Lemma \ref{lem1}, $E(K)[p]$ must grow by Theorem \ref{burton}, so we let $f_p:=\psi_p.$
        \item If $S_p=1$ then the $p$-part cannot grow in a quadratic extension by Lemma \ref{full}, so let $f_p=1.$
       \item If $S_p=2$ then the $p$-part cannot grow in a quadratic extension by Lemma \ref{lem1}, so we ignore it.
    \end{itemize}
 \end{enumerate}
\end{enumerate}
      \item For each prime $p$ in $S$:
      \begin{enumerate}[(a)]
      \item Factor $f_p$ over $K.$
      \item For each factor $g$ of $f_p$ over $K$:
      \begin{enumerate}[(i)]
      \item If deg$(g)=1$:
            \begin{itemize}
               \item  We may write $g=x-c.$ Compute $(c,d)$ on $E.$ If $d \not\in K,$ then the torsion grows in $L=K(d)$ and Magma can compute $E(L)_{\text{tor}}.$
        \end{itemize}
                \item If deg$(g)=2$:
                \begin{itemize}
                    \item Construct the splitting field $L$ of $g$ over $K,$ and let $c$ be a root of $g$ in $L.$ Compute $(c,d)$ on $E$. If $d\in L,$ then the torsion grows in $L$ and Magma can compute $E(K)_{\text{tor}}.$
                \end{itemize}   
            \end{enumerate}
         \end{enumerate}
\end{enumerate}

\section*{Appendix}
\begin{table}[h!]
\caption{\label{table:last2}Examples for $K=\Q(i)$ in Case $E(K)[2] \not\simeq C_2 \oplus C_2$}
\centering
\begin{tabular}{| p{2.5cm} | p{2cm} | p{3.3cm} | p{6cm} | }
 \hline
 $E(K)_{\text{tor}}$ & \ \ \ \ \ $d$ & $E(K(\sqrt{d}))_{\text{tor}}$ & Weierstrass Model for E 
 \\
 \hline
 $C_1$ & \ \ \ $-3$ & $C_3$ & $[0,1,1,-769,-8470]$ \\
 \hline
 &\ \ \ \ \ $5$ & $C_5$ & $[1,0,1,549,-2202]$ \\
 \hline
 &\ \ \ $-3$ & $C_7$ & $[1,-1,0,-24,-64]$ \\
 \hline
 &\ \ \ $-3$ & $C_9$ & $ [1,-1,0,-123,-667]$ \\
 \hline
 $C_2$ &\ \ \ \ \  $2$ & $C_4$ &  $[1,0,1,-454,-544] $ \\
 \hline
      & \ \ \ $-3$   & $C_6$&  $[1,0,1,-171,-874]$  \\
 \hline
      & \ \ \ $-3$   & $C_8$  &  $[1,-1,0,0,-5]$ \\
 \hline
  &\ \ \ \ \ $5$   & $C_{10}$ &  $[1,1,0,-700,34000]$\\
  \hline
     & \ \ \ $-3$  & $C_{12}$  &  $[1,0,1,-14,-64]$\\
  \hline
  & \ \ \ $-15$ & $C_{16}$ & $[1,-1,1,47245,-2990253]$ \\
  \hline
  & \ \ \ $-7$  & $C_2 \oplus C_2$ & $[1,0,1,-171,-874]$ \\  
  \hline
& \ \ \ $-3$ & $C_2 \oplus C_6$ & $[0,0,0,0,-27]$  \\
\hline
& \ \ \ $-15$  & $C_2 \oplus C_{10}$  &  $[1,-1,1,-6305,-924303]$ \\
\hline
$C_3$ &\ \ \ $-3$ & $C_3 \oplus C_3$ & $[0,1, 1,-9,-15]$ \\
\hline
&\ \ \ \ \ $5$& $C_3 \oplus C_5$& $[1,0,1,-76,298]$ \\
\hline
$C_4$ & \ \ \ \ \ $5$ & $C_8$ &  $[1,1,1,-80,242]$  \\
\hline
     & \ \ \ $-3$   &  $C_{12}$  & $[1,-1,1,13,-61]$   \\
\hline
&\ \ \ \ $65$ & $C_2 \oplus C_4$& $[1,0,0,-110,435]$  \\
\hline
&\ \ \ $-7$ & $C_2 \oplus C_8$& $[1,1,1,10289,-298411]$ \\
\hline
&\ \ \ $-15$ & $C_2 \oplus C_{12}$ & $[1,1,1,-338,-7969]$      \\
\hline
$C_5$ &\ \ \ \ \ $5$ & $C_3 \oplus C_5$  & $[1,1, 1,-3,1]$ \\
\hline
$C_6$ & \ \ \ \ \ $2$ & $C_{12}$ & $[1,0,1,-289,1862]$ \\
\hline
&\ \ \ \ \ $6$ & $C_2 \oplus C_6$ & $[1,0,1,-289,1862]$   \\
\hline
&\ \ \ $-3$ & $C_3 \oplus C_6$ & $[1,0,1,4,-6]$ \\
\hline
$C_8$ & \ \ \ $-15$  & $C_{16}$ & $[1,0,0,210,900]$    \\
\hline
& \ \ \ \ \ $7$   & $C_2 \oplus C_8$ & $[1,0,0,-39,90]$    \\
\hline
$C_{10}$ & \ \ \ \ $33$& $C_2 \oplus C_{10}$ & $[1,0,0,-45,81]$  \\
\hline
$C_{12}$ & \ \ \ $-15$ & $C_2 \oplus C_{12}$ & $[1,-1,1,-122,1721]$    \\
\hline
\end{tabular}
\end{table}

\newpage
\begin{table}[h!]
\caption{\label{table:last1} Examples for $K=\Q(\sqrt{-3})$ in Case $E(K)[2] \not\simeq C_2 \oplus C_2$}
\centering
\begin{tabular}{| p{2.5cm} | p{2cm} | p{3.3cm} | p{6.75cm} | }
 \hline
 $E(K)_{\text{tor}}$ & \ \ \ \ \ $d$ & $E(K(\sqrt{d}))_{\text{tor}}$ & Weierstrass Model for E 
 \\
 \hline
 $C_1$ &\ \ \ \ \ $5$ &$C_3$ & $[1,1,1,-13,-219]$ \\
 \hline
 &\ \ \ \ \ $5$ & $C_5$ & $[0,-1,1,42,443]$  \\
 \hline
 &\ \ \ \ $13$ & $C_7$ & $[1,0,1,-975873773,11746188793640]$ \\
 \hline
 &\ \ \ \ \ $3$ & $C_9$ & $[0,0,0,-1971,44658]$  \\
 \hline
 &\ \ \   $-15$ & $C_3 \oplus C_3$ &$[0,-1,1,217,-282]$ \\
 \hline
 $C_2$ &\ \ \  $-1$    & $C_4$  &  $[1,1,1,-2160,-39540] $ \\
 \hline
 &\ \ \  $-1$  & $C_6$ & $[0,-1,0,-1,0]$ \\
\hline
&\ \ \ $-1$ & $C_8$ & $[0,-1,0,-384,-2772]$ \\
\hline
&\ \ \ \ \ $5$ & $C_{10}$ & $[1,1,0,-20700,1134000]$ \\
\hline
& \ \ \ \ \ $3$ & $C_{12}$ & $[0,-1,0,24,-144]$ \\
\hline
& \ \ \ $-15$ & $C_{16}$ & $[1,-1,1,47245,-2990253]$ \\
\hline
&\ \ \ \ \ $5$ & $C_2 \oplus C_2$ & $[1,1,1,-2160, -39540]$ \\
\hline
&\ \ \ $-1$ & $C_2 \oplus C_6$ & $[0,-1,0,4,-4]$ \\
\hline
& \ \ \ \ \ $5$ & $C_2 \oplus C_{10}$ & $[1,1,0,-700,34000]$\\
\hline
&\ \ \ \ $21$ & $C_3 \oplus C_6$ & $[1,1,0,-1740,22184]$ \\
\hline
& \ \ \ $-7$ & $C_6 \oplus C_6$ & $[1,1,0,220,2192]$\\
\hline
$C_3$ &\ \ \ \ \ $5$& $C_3 \oplus C_5$ & $[1,0,1,-76,298 ]$ \\
\hline
$C_4$ & \ \ \ \ \ $5$ & $C_8$ & $[1,1,1,-80,242]$ \\
\hline
&\ \ \ $-2$ & $C_{12}$ & $[0,1,0,-4385,94815]$ \\
\hline
&\ \ \ \ $15$ & $C_2 \oplus C_4$ & $[1,1,1,-80,242]$\\
\hline
&\ \ \ \ \ $5$ & $C_2 \oplus C_8$ & $[1,0,1,-1,23]$\\
\hline
& \ \ \ $-15$ & $C_2 \oplus C_{12}$ & $[1,1,1,37,281]$\\
\hline
&\ \ \ $-1$ & $C_4 \oplus C_4$ & $[0,0,0,13,-34]$ \\
\hline
$C_5$ &\ \ \ \ \ $5$ & $C_3 \oplus C_5$ & $[1,1,1,-3,1]$ \\
\hline
$C_{6}$ &\ \ \ $-2$& $C_{12}$  & $[1,0,1,-69,-194]$ \\
\hline
&\ \ \ \  $10$ & $C_2 \oplus C_6$ & $[1,0,1,-69,-194]$\\
\hline
$C_8$ &$6\sqrt{-3}-30$ & $C_{16}$ & $[1,0,0,-39,90]$ \\
\hline
&\ \ \ $-1$ & $C_2 \oplus C_8$ & $[1,1,1,35,-28]$\\
\hline
$C_{10}$ &\ \ \ \ $33$ & $C_2 \oplus C_{10}$ & $[1,0, 0,-45,81]$ \\
\hline
$C_{12}$ &\ \ \ \ \ $5$  & $C_2 \oplus C_{12}$  & $[1, 0,1,1,2]$  \\
\hline
$C_3 \oplus C_6$ &\ \ \ $-7$  & $C_6 \oplus C_6$  & $[1,0,1,4,-6]$ \\
\hline
\end{tabular}
\end{table}

\bibliographystyle{plain}
\bibliography{bibfile}

\end{document}